\newtheorem{theorem}{Theorem}
\newtheorem{proposition}[theorem]{Proposition}
\newtheorem{lemma}[theorem]{Lemma}
\newtheorem{definition}[theorem]{Definition}
\newtheorem{example}[theorem]{Example}
\numberwithin{theorem}{section}
\g@addto@macro{\endabstract}{\@setabstract}
\newcommand{\authorfootnotes}{\renewcommand\thefootnote{\@fnsymbol\c@footnote}}%
\begin{document}
\begin{center}
 \LARGE 
Unshuffling a deck of cards 
\par 
\bigskip
\normalsize
\authorfootnotes
Cornelia A. Van Cott and Katie Wang
\end{center}
\subjclass[2000]{Primary 20B35; Secondary ⟨secondary classifications⟩}

\begin{abstract}
We investigate the mathematics behind unshuffles, a type of card shuffle closely related to classical perfect shuffles. To perform an unshuffle, deal all the cards alternately into two piles and then stack the one pile on top of the other. There are two ways this stacking can be done (left stack on top or right stack on top), giving rise to the terms left shuffle ($L$) and right shuffle ($R$), respectively. We give a solution to a generalization of Elmsley's Problem (a classic mathematical card trick) using unshuffles for decks with $2^k$ cards. We also find the structure of the permutation groups  $\langle L, R \rangle$ for a deck of $2n$ cards for all values of $n$. We prove that the group coincides with the perfect shuffle group unless $n\equiv 3 \pmod 4$, in which case the group $\langle L, R \rangle$ is equal to $B_n$, the group of centrally symmetric permutations of $2n$ elements, while the perfect shuffle group is an index 2 subgroup of $B_n$.
\end{abstract}

\section{Introduction}
A well known card shuffling technique in the world of magicians is the so-called {\em perfect shuffle} (also known as the {\em faro shuffle}). Not only are perfect shuffles used by magicians to do card tricks, but gamblers have used these shuffles to cheat at card games since the 1800's~\cite{expose,braue,Green, Jordan}. 
As with all card shuffling techniques, perfect shuffles can be considered from a mathematical perspective as a permutations of the set of cards. From this point of view, we can better understand the characteristics, limitations, and properties of these shuffles. 

Here is how to do a perfect shuffle. Take a deck of $2n$ cards and split the deck exactly in half. Next, perfectly interlace the cards from the two stacks together. There are two ways that this interlacing can be done. An {\em out shuffle} interlaces the stacks in such a way that the top and bottom cards from the original deck remain on the top and bottom, respectively. An {\em in shuffle} interlaces the cards so that the top and bottom cards from the original deck become the second and second-to-last card, respectively. See Figure~\ref{perfect} for an example.

While perfect shuffles are the basis for many flashy card tricks today, Persi Diaconis (a professional magician and mathematician) and Ron Graham estimated in 2011 that it takes ``a few hundred hours, certainly thousands of repetitions" for a person to learn to do a perfect shuffle. They also added, ``We estimate that there are fewer than a hundred people in the world who can do eight perfect shuffles in under a minute."\cite{diaconis2011} Under the circumstances, it is natural to wonder if an easier shuffling technique could provide the same opportunities for impressive tricks. With this motivation, Doug Ensley introduced so-called {\em unshuffles} in his article ``Unshuffling for the imperfect magician."~\cite{Ensley}

\begin{figure}
\begin{center}
\includegraphics[width=11cm]{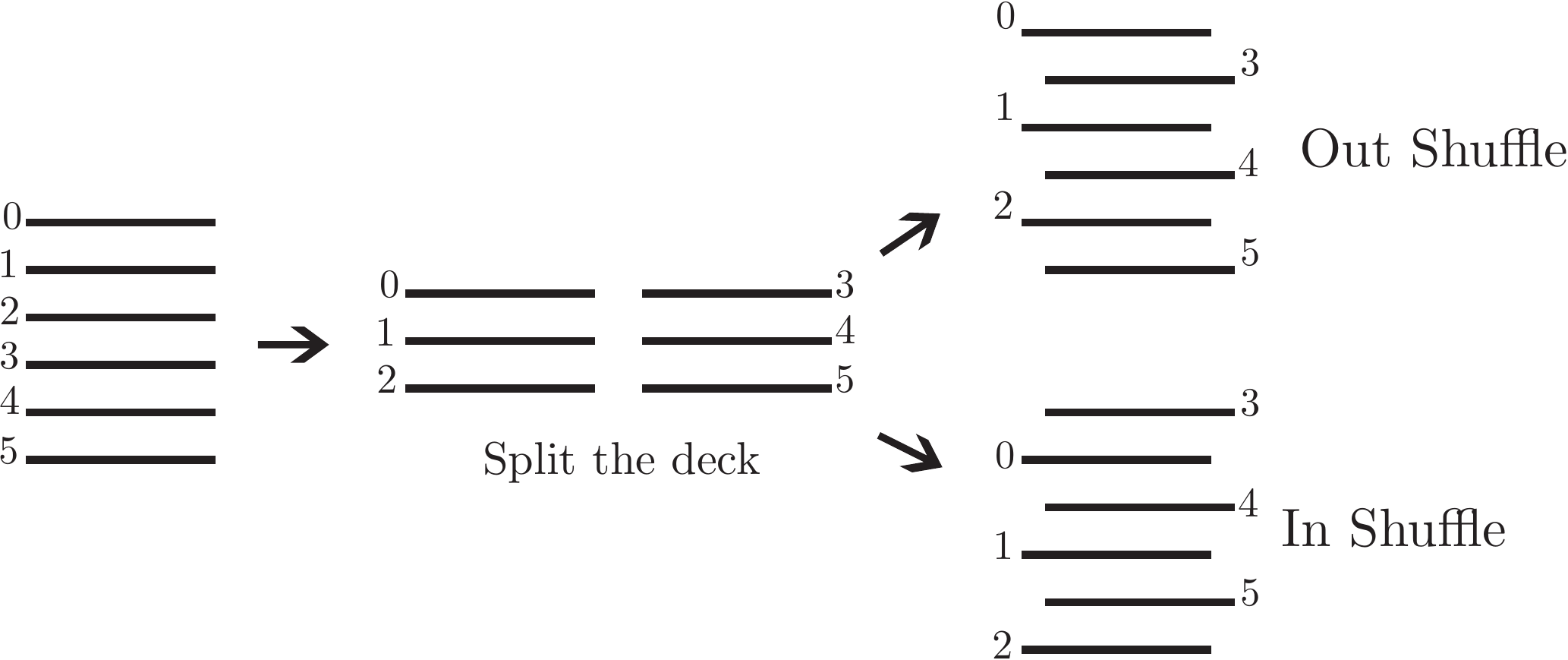} 
\end{center}
\caption{Perfect shuffles on a deck of 6 cards.}\label{perfect}
\end{figure}

Unshuffles are not nearly so difficult. Take a deck of $2n$ cards, and deal the cards from the top of the deck alternately into two piles, starting with one pile on the left and then one on the right. Reassemble the deck by stacking one pile on top of the other. There are two ways that this can be done. The {\em right shuffle} (denoted $R$) stacks the right pile on top of the left. The {\em left shuffle} ($L$) stacks the left pile on top of the right. After a right shuffle, notice that the top and bottom cards have now swapped places, while after a left shuffle, the original top and bottom cards are now the center two cards of the deck. See Figure~\ref{unshuffle} for an example with 6 cards. 

Ensley investigated unshuffles and described several tricks that can be performed via unshuffles by capitalizing on their mathematical properties~\cite{Ensley}. Our purpose here is to continue Ensley's investigation of unshuffles.

We have two main results. First, we consider Elmsley's Problem, a classic mathematical card trick where one moves the top card in the deck to any given position. We solve a generalization of Elmsley's Problem using unshuffles on a deck of $2^k$ cards (see Theorem~\ref{elmsley}).   For our second main result, we determine the structure of the permutation groups generated by left and right shuffles $\langle L, R \rangle$ on $2n$ cards. We prove that the group coincides with that of perfect shuffles for all $n$ such that  $n\not\equiv 3\pmod{4}$. If $n\equiv 3\pmod{4}$, then the group $\langle L, R \rangle$ with $2n$ cards is isomorphic to $B_n$, the group of all centrally symmetric permutations. Meanwhile, $\langle I, O \rangle$ is an index 2 subgroup of $B_n$ in this case. The full result is as follows.
\begin{theorem}\label{introtheorem}
Suppose a deck has $2n$ cards. 
\begin{enumerate}[(a)]
   \item If $2n = 12$, then $\langle L, R\rangle = \langle I, O\rangle$ which is isomorphic to the semi-direct product $\mathbb{Z}^6_2 \rtimes S_5$, where $S_5$ is the symmetric group on $5$ elements.
   \item If $2n=24$, then $\langle L, R\rangle=\langle I, O\rangle$ which is isomorphic to the semi-direct product $\mathbb{Z}^{11}_2 \rtimes M_{12}$, where the group $M_{12}$ is the Mathieu group of degree 12. The group has order $2^{11}\cdot 95040.$
   \item If $2n = 2^k$, then $\langle L, R\rangle=\langle I, O\rangle$ which is isomorphic to the semi-direct product $\mathbb{Z}^k_2 \rtimes \mathbb{Z}_k$.
\item If $n \equiv 0\pmod{4}$, $n > 12$, and $n$ is not a power of 2, then $\langle L, R\rangle = \langle I, O\rangle$ which is the intersection of the kernels of $sgn$ and $\overline{sgn}$ and has order $n!2^{n-2}$. (We define $sgn$ and $\overline{sgn}$ in Section~\ref{groups}.)
\item If $n \equiv 1 \pmod{4}$ and $n>1$, then $\langle L, R\rangle=\langle I, O\rangle$ which is the kernel $\overline{sgn}$ and has order $n!2^{n-1}$.
\item If $n \equiv 2\pmod{4}$ and $n > 6$, then $\langle L, R\rangle = \langle I, O\rangle=B_n$, where $B_n$ is the group of centrally symmetric permutations in $S_{2n}$ and has order $n!2^{n}$.
\item If $n \equiv 3\pmod{4}$, then $\langle L, R\rangle = B_n$ and has order $n!2^n$.

\end{enumerate}
\end{theorem}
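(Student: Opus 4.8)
The plan is to determine $\langle L,R\rangle$ by comparing it with the perfect shuffle group $H:=\langle I,O\rangle$, whose structure is classical (Diaconis--Graham--Kantor and recalled in Section~\ref{groups}), together with a single extra involution. Write $\rho$ for the order-reversing permutation of the $2n$ positions, $\rho(i)=2n+1-i$. The first step is to write $I,O,L,R$ out as permutations and read off the two identities
\[
R=\rho\,O^{-1},\qquad L=\rho\,I^{-1},
\]
and to observe that reversing the deck commutes with each perfect shuffle, so $\rho$ commutes with $I$ and $O$, hence with all of $H$, and in particular with $L$ and $R$. From this it follows immediately that $\langle L,R,\rho\rangle=\langle I,O,\rho\rangle=:\widetilde H$, that $\langle L,R\rangle\trianglelefteq\widetilde H$ (as $\rho$ even centralizes $\langle L,R\rangle$), and that $\widetilde H/\langle L,R\rangle$ is cyclic of order $1$ or $2$: it is generated by the common image of $I$, $O$, and $\rho$ (common because $\rho I^{-1}=L$ and $\rho O^{-1}=R$ lie in $\langle L,R\rangle$), and that image is $2$-torsion because $I^{-2}=L^{2}$ and $O^{-2}=R^{2}$ lie in $\langle L,R\rangle$.

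So it remains to settle two things. (1) Is $\rho\in H$? This decides whether $\widetilde H=H$ or $\widetilde H$ properly contains $H$ with index $2$. Regarding $H\le B_{n}$, the element $\rho$ is the signed permutation with trivial $S_{n}$-part and all $n$ sign coordinates flipped, so $sgn(\rho)=(-1)^{n}$ and $\overline{sgn}(\rho)=0$; matching this against the known description of $H$ for each residue of $n\bmod 4$ (and for $2n\in\{12,24\}\cup\{2^{k}\}$) shows $\rho\in H$ exactly when $n\not\equiv 3\pmod 4$. When $n\equiv 3\pmod 4$, $H$ is an index-$2$ subgroup of $B_{n}$ not containing $\rho$, so $\widetilde H=H\cup\rho H=B_{n}$. (2) Is $[\widetilde H:\langle L,R\rangle]=1$? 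Equivalently: is there \emph{no} homomorphism $\chi\colon\widetilde H\to\mathbb Z_{2}$ with $\chi(I)=\chi(O)=\chi(\rho)=1$? One answers this by computing $\mathrm{Hom}(\widetilde H,\mathbb Z_{2})$ from the explicit structure of $H$: for the ``generic'' perfect shuffle groups and for $2n=24$ the abelianization comes from coinvariant computations (trivial in the perfect cases, $\mathbb Z_{2}$ or $\mathbb Z_{2}^{2}$ otherwise), and for $2n=12$ and $2n=2^{k}$ it is a small explicit abelian group; the nontrivial characters are built from $sgn$, $\overline{sgn}$, and, in the $2^{k}$ case only, one character of a cyclic quotient. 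One then checks that every such character kills at least one of $I$, $O$, $\rho$: when $n$ is even, $\rho$ has even sign-weight and trivial $S_{n}$-part, so every character kills $\rho$; when $n$ is odd one uses the classical parity of the in- and out-shuffles (at least one of $I$, $O$ is an even permutation of the $2n$ cards), and in the $2^{k}$ case the fact that $O$ is a pure cyclic shift with trivial sign-weight. In every case the index is $1$.

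Combining these, $\langle L,R\rangle=\widetilde H$ always, and $\widetilde H$ equals $\langle I,O\rangle$ when $n\not\equiv 3\pmod 4$ and equals $B_{n}$ when $n\equiv 3\pmod 4$. Substituting the Diaconis--Graham--Kantor structure of $\langle I,O\rangle$---the intersection $\ker sgn\cap\ker\overline{sgn}$ for $n\equiv 0$, the kernel of $\overline{sgn}$ for $n\equiv 1$, all of $B_{n}$ for $n\equiv 2$, the appropriate index-$2$ subgroup for $n\equiv 3$, and the three exceptional groups for $2n\in\{12,24\}\cup\{2^{k}\}$---then reproduces cases (a)--(g) and the stated orders.

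I expect the main obstacle to be step (2): proving \emph{uniformly}, across the four residues of $n\bmod 4$ and the three families of exceptional decks, that no $\mathbb Z_{2}$-character of $\widetilde H$ detects all three of $I$, $O$, $\rho$ simultaneously. This rests on two auxiliary inputs, each needing its own argument---the abelianizations of the perfect shuffle groups (immediate in the perfect cases, but requiring the coinvariant calculations for $B_{n}$, the $2n=12$ group, and the $2^{k}$ group) and the parities of $I$ and $O$ as elements of $S_{2n}$ as a function of $n$. By contrast the opening identities $R=\rho O^{-1}$, $L=\rho I^{-1}$ and the normality of $\langle L,R\rangle$ in $\widetilde H$ are short, and they are precisely what collapses the problem onto the already-known group $\langle I,O\rangle$.
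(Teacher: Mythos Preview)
Your approach is correct and takes a genuinely different route from the paper. For the generic residues (d)--(g) the paper does not use your index-$2$ sandwich $\langle L,R\rangle\trianglelefteq\widetilde H=\langle I,O,\rho\rangle$ at all; instead it recomputes $|\langle L,R\rangle|$ from scratch by observing that the specific words $c,w,b,c',h(r)$ from \cite{Diaconis} all have \emph{even} total length in $I$ and $O$, so that after substituting $I=VL^{-1}$ and $O=VR^{-1}$ the copies of $V$ cancel pairwise and these words land in $\langle L,R\rangle$; this yields a large subgroup $G^{*}\subseteq\langle L,R\rangle$, from which $|K|=|\ker\phi|_{G}|$ and hence $|G|$ are computed directly and matched against $|\langle I,O\rangle|$ or $|B_n|$. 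For the exceptional decks (a)--(c) the paper simply exhibits $V$ as an explicit power of $L$ or $R$ (namely $V=L^{6}$, $V=L^{10}$, and $V=L^{k}$ or $R^{k}$ according to the parity of $k$). Your route is more structural and avoids any dependence on the particular words chosen in \cite{Diaconis}, at the cost of needing the abelianizations of all the perfect-shuffle groups to carry out step~(2); in the generic cases these are easy (e.g.\ $\ker sgn\cap\ker\overline{sgn}$ is perfect for $n\ge 5$, and $B_n$ has only $sgn$ and $\overline{sgn}$), but for the exceptional families the paper's one-line verifications $V=L^{6}$, $V=L^{10}$, $V=L^{k}$ or $R^{k}$ are quicker than your coinvariant computations. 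One small caution: for $2n=2^{k}$ with $k$ odd, your blanket claim ``$n$ even $\Rightarrow$ every character kills $\rho$'' actually fails (the sum-of-bits character sends $\rho\mapsto -1$ since $\rho$ has weight $k$), but you have already anticipated this by falling back on the observation that $O$ has trivial sign-weight there, which is the correct fix.
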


From this result, we see that limiting oneself to unshuffles will not limit the card arrangements that one can reach as compared to using perfect shuffles. For $n\not\equiv 3 \pmod{4}$, unshuffles on a deck of $2n$ cards yield the exact same set of arrangements as perfect shuffles. When $n\equiv 3 \pmod{4}$, unshuffles allow a person to reach all the arrangements as perfect shuffles plus another set of arrangements of equal size which perfect shuffles do not reach. 

We begin in Section~\ref{properties} by investigating the mathematical properties of unshuffles. In Section~\ref{elmsleysection}, we provide a solution to a generalization of Elmsley's Problem on a deck of $2^k$ cards. We prove Theorem~\ref{introtheorem} in Section~\ref{groups} as follows. Theorem~\ref{special} proves parts (a) and (b) of Theorem~\ref{introtheorem}, Theorem~\ref{power} proves (c), and Theorem~\ref{main} proves (d)--(g). 

\begin{figure}
\begin{center}
\includegraphics[width=11cm]{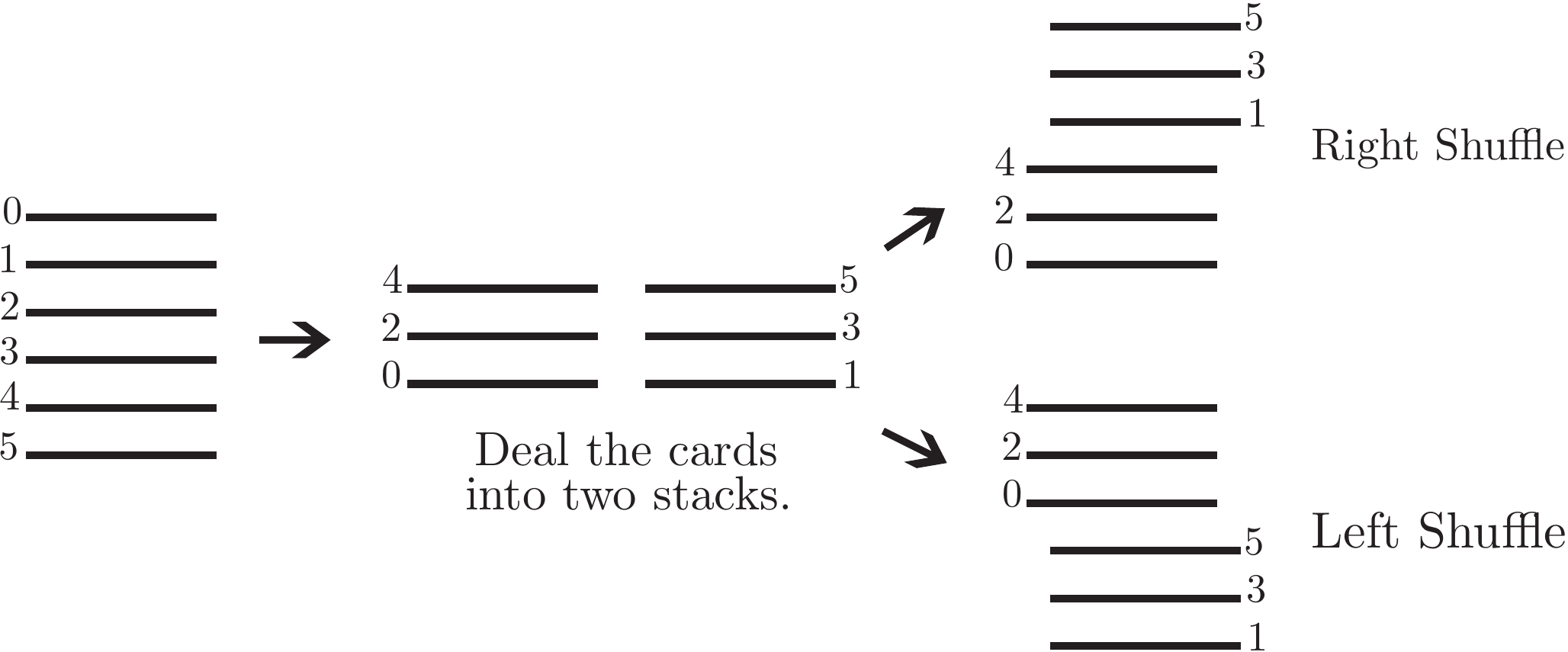} 
\end{center}
\caption{Unshuffles on a deck of 6 cards.}\label{unshuffle}
\end{figure}

\section{Properties of unshuffles}\label{properties}
Throughout our discussion, we consider decks with an even number of cards, denoted by $2n$. We index the cards by their distance from the top of the deck. So the top card has index 0, the second card from the top has index 1, and so on. The bottom card has index $2n-1$. We treat shuffles as functions and hence read their products from right to left.

A formula for the location of the $i^{th}$ card after an in or out perfect shuffle is known (see, for example,~\cite{Diaconis, Ensley}). These formulas can be stated in terms of modular arithmetic as follows:
$$I(i) = 2i + 1 \!\!\!\pmod{2n+1},\quad\text{ and } \quad 
O(i) = 
\begin{cases}
        2i  \!\!\! \pmod{2n-1} & \text{if } i\neq 2n-1\\
        2n-1 & \text{if } i =2n-1.
\end{cases} 
    $$

In the same way, we can find formulas to express the action of the left and right shuffles on a deck of cards. Notice that the formulas have similarities with those given above for in and out shuffles. 
    
\begin{lemma}\label{formula}
Suppose a deck has $2n$ cards. The index of the $i^{th}$ card after a left or right shuffle is given by the following formulas:
$$ L(i) = ni + n-1\!\!\!\! \pmod{2n+1},\quad\text{ and } \quad 
 R(i) = 
\begin{cases}
        (n-1)i \!\!\!\! \pmod{2n-1} & \text{if } i\neq 0\\
        2n-1 & \text{if } i =0. 
\end{cases}  
    $$    
\end{lemma}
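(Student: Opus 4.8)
The plan is to prove both formulas by directly tracking the dealing-and-restacking procedure, writing down the resulting position of each card as an explicit piecewise function of its index, and then checking that this piecewise function agrees with the stated modular expression.

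First I would record the state of the two piles after the deal. Reading cards off the top of the deck, the even-indexed cards $0, 2, 4, \ldots, 2n-2$ are dealt successively onto the left pile and the odd-indexed cards $1, 3, \ldots, 2n-1$ onto the right pile. Because each newly dealt card lands \emph{on top} of its pile, the deal reverses the order within each pile: from top to bottom the left pile reads $2n-2, 2n-4, \ldots, 2, 0$ and the right pile reads $2n-1, 2n-3, \ldots, 3, 1$. Equivalently, for $0 \le j \le n-1$ the card originally at index $2j$ sits at depth $n-1-j$ in the left pile, and the card at index $2j+1$ sits at depth $n-1-j$ in the right pile.

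Next I would read off the new indices after each restacking. For the right shuffle $R$, the right pile is placed on top, so the card at index $2j+1$ ends up at index $n-1-j$, while for $j \ge 1$ the card at index $2j$ lands at index $n + (n-1-j) = 2n-1-j$; when $j = 0$ this same card, namely card $0$, sits at the very bottom of the deck, i.e.\ at index $2n-1$, which is exactly the exceptional value in the statement. For the left shuffle $L$ the left pile is on top, so the card at index $2j$ ends up at index $n-1-j$ and the card at index $2j+1$ ends up at index $2n-1-j$ (no exceptional case arises). Finally I would match these against the closed forms. Working modulo $2n-1$ one has $2(n-1)\equiv -1$, so $(n-1)(2j+1)\equiv -j+(n-1)$ and $(n-1)(2j)\equiv -j$; since $0\le j\le n-1$ these reductions lie in $\{0,\dots,n-1\}$ and $\{n,\dots,2n-2\}$ respectively, reproducing $n-1-j$ and $2n-1-j$. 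Working modulo $2n+1$ one has $2n\equiv -1$, so $n(2j)+n-1\equiv -j+(n-1)$ and $n(2j+1)+n-1\equiv -j+(2n-1)$, and the same range check shows these equal $n-1-j$ and $2n-1-j$.

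I do not expect any genuine obstacle here: the content is entirely bookkeeping. The one place where care is required — and where sign or off-by-one errors are easy to introduce — is pinning down three conventions simultaneously: the order in which cards are dealt, the fact that dealing reverses order within a pile (each card goes on top), and the top-indexed labeling of positions. Once those are fixed, everything reduces to a one-line residue computation together with the separate, and geometrically transparent, treatment of $i = 0$ under $R$, where the bottom card of the deck becomes the bottom card of the left pile and hence remains at index $2n-1$.
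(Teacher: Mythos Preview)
Your proof is correct and follows essentially the same approach as the paper: track the deal to obtain explicit piecewise formulas for $L(i)$ and $R(i)$ depending on the parity of $i$, then match these against the stated modular expressions via $2n\equiv -1\pmod{2n+1}$ and $2(n-1)\equiv -1\pmod{2n-1}$. One small slip in your closing paragraph: card $i=0$ is the \emph{top} card of the original deck, not the bottom, and under $R$ it \emph{moves} to index $2n-1$ rather than ``remaining'' there; your actual argument in the second paragraph has this right.
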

\begin{proof}
    After we deal the deck of cards from the top of the deck alternately into two piles starting with the first card on the left, the cards are in the following array:
\begin{align*}~\label{array}
\begin{array}{cccc} 
2n-2 &&& 2n-1 \\ 
2n-4 &&& 2n-3\\ 
\vdots  &&& \vdots\\
2  &&& 3\\
0 && & 1
\end{array}
\end{align*}
Stacking the piles together with a left shuffle puts the cards in the following order: 
$$2n-2, 2n-4, \ldots, 2, 0, 2n-1, 2n-3, \ldots, 3, 1.$$
The index of the $i^{th}$ card after a left shuffle is then given by the equation:
\begin{equation}\label{L}
L(i) = 
\begin{cases}
        -\tfrac{1}{2}i + n-1 & \text{if $i$ is even }\\
        -\tfrac{1}{2}(i+1) + 2n & \text{if $i$ is odd}
\end{cases}
\end{equation}  
We want to write $L(i)$ in terms of an expression in $\mathbb{Z}_{2n+1}$. Observe that in $\mathbb{Z}_{2n+1}^*$, we have $(-2)^{-1} \equiv n \pmod{2n+1}$ and also $2n \equiv -1\pmod{2n+1}$. Therefore, no matter whether $i$ is even or odd, we can express $L(i)$ as follows: $L(i)= ni + (n-1) \pmod{2n+1}.$

Now let's consider right shuffles. After stacking the cards with the right stack on top, the cards are in the following order:
$$2n-1, 2n-3, \ldots, 3, 1,2n-2, 2n-4, \ldots, 2, 0.$$
 So then for even $i$, the index $R(i)$ is given simply by adding $n$ to $L(i)$. For odd $i$, $R(i)$ is given by subtracting $n$ from $L(i)$. Thus:
      $$ R(i) = 
\begin{cases}
        -\tfrac{1}{2}i + 2n-1 & \text{if $i$ is even }\\
        -\tfrac{1}{2}(i+1) + n & \text{if $i$ is odd}
\end{cases}
    $$  
We want to write $R(i)$ in terms of an expression in $\mathbb{Z}_{2n-1}$. This cannot be done for $i=0$, since $R(0) = 2n-1$, so that is a special case. But for $i>0$, we proceed as follows. Observe that in $\mathbb{Z}_{2n-1}^*$, we have $(-2)^{-1} \equiv n-1 \pmod{2n+1}$ and also $n \equiv 1-n \pmod{2n-1}$. Making the appropriate substitutions, we find that for all $i>0$, we can express the above formula for $R$ as:
$R(i)= (n-1)i \pmod{2n-1}.$
\end{proof}
We note that a right shuffle swaps the places of the outermost cards (cards $0$ and $2n-1$) and performs a left shuffle on the inner $2n-2$ cards. See Figure~\ref{unshuffle} for an example.

Using the above formulas, we can now determine how many times one must do a left (or right) shuffle so that the deck returns to its original arrangement. In mathematical terms, we are finding the {\em order} of the shuffle. 

\begin{proposition}
    Suppose a deck has $2n$ cards. The order of the left shuffle is the order of $-2$ in $\mathbb{Z}_{2n+1}^{*}$. Denote the order of $-2$ in $\mathbb{Z}_{2n-1}^{*}$ by $r$. If $r$ is even, then the right shuffle has order $r$. If $r$ is odd, right shuffle has order $2r$.

\end{proposition}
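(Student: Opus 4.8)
The plan is to read off both orders directly from the formulas in Lemma~\ref{formula}, viewing the left shuffle as (the restriction of) an affine map on $\mathbb{Z}_{2n+1}$ and the right shuffle as the product of a transposition with a linear map on $\mathbb{Z}_{2n-1}$.

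For the left shuffle, consider the affine map $f\colon \mathbb{Z}_{2n+1}\to\mathbb{Z}_{2n+1}$ given by $f(i)=ni+(n-1)$ (recall $n\equiv(-2)^{-1}\pmod{2n+1}$ is a unit, as $\gcd(n,2n+1)=1$, so $f$ is a bijection). A one-line check gives $f(2n)=2n^2+n-1=n(2n+1)-1\equiv 2n\pmod{2n+1}$, so $f$ fixes $2n$ and therefore restricts to a permutation of $\{0,1,\dots,2n-1\}$; by Lemma~\ref{formula} that restriction is exactly $L$, hence $\operatorname{ord}(L)=\operatorname{ord}(f)$. Iterating and using the telescoping identity $(n-1)(1+n+\cdots+n^{k-1})=n^k-1$, we get $f^k(i)=n^k i+(n^k-1)$, so $f^k=\mathrm{id}$ iff $(n^k-1)(i+1)\equiv 0\pmod{2n+1}$ for all $i$, i.e. iff $n^k\equiv 1\pmod{2n+1}$. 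Thus $\operatorname{ord}(L)$ is the multiplicative order of $n$ in $\mathbb{Z}_{2n+1}^{*}$, and since an element and its inverse have equal order in any group, this equals the order of $-2$ in $\mathbb{Z}_{2n+1}^{*}$.

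For the right shuffle, first note $\gcd(n-1,2n-1)=\gcd(n-1,1)=1$, so multiplication by $n-1$ is a permutation of $\mathbb{Z}_{2n-1}$ fixing $0$. Let $\tau$ be the transposition $(0\ \ 2n-1)$ and let $\sigma$ be the permutation of $\{0,1,\dots,2n-1\}$ that fixes $0$ and $2n-1$ and sends $i\mapsto (n-1)i \pmod{2n-1}$ for $1\le i\le 2n-2$ (this value lies in $\{1,\dots,2n-2\}$ because $n-1$ is a unit and $i\neq 0$). Then Lemma~\ref{formula} gives $R=\tau\sigma$, which is precisely the remark following that lemma: $R$ swaps the outer cards and left-shuffles the inner $2n-2$. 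The supports $\{0,2n-1\}$ and $\operatorname{supp}(\sigma)\subseteq\{1,\dots,2n-2\}$ are disjoint, so $\tau$ and $\sigma$ commute and $R^k=\tau^k\sigma^k$ is the identity exactly when $2\mid k$ and $\sigma^k=\mathrm{id}$. The order of $\sigma$ is the multiplicative order of $n-1$ in $\mathbb{Z}_{2n-1}^{*}$, and $n-1\equiv(-2)^{-1}\pmod{2n-1}$, so this order equals $r$. Hence $\operatorname{ord}(R)=\operatorname{lcm}(2,r)$, which is $r$ when $r$ is even and $2r$ when $r$ is odd.

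There is no serious obstacle here; the argument is essentially bookkeeping once Lemma~\ref{formula} is in hand. The points that need care are: (i) the formulas live modulo $2n\pm1$ while the deck has $2n$ cards, so one must identify the missing residue as a fixed point (the residue $2n$ for $L$) or absorb it into the outer transposition (for $R$) before transporting order computations into $\mathbb{Z}_{2n\pm1}^{*}$; (ii) the elementary facts that $n-1$ is a unit modulo $2n-1$ and that inversion preserves order in a group. The degenerate case $n=1$ (a two-card deck) can be checked directly and is consistent with the statement.
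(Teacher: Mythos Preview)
Your proof is correct and follows essentially the same approach as the paper: iterate the formulas of Lemma~\ref{formula} to reduce the order of $L$ to the multiplicative order of $n\equiv(-2)^{-1}$ in $\mathbb{Z}_{2n+1}^{*}$, and handle $R$ by treating the inner $2n-2$ cards (multiplication by $n-1\equiv(-2)^{-1}$ in $\mathbb{Z}_{2n-1}^{*}$) separately from the outer swap, combining them via an $\operatorname{lcm}$. Your framing is slightly more careful than the paper's in explicitly identifying the fixed residue $2n$ for $L$ and writing $R=\tau\sigma$ as a product of disjoint-support permutations, but the underlying argument is the same.
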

\begin{proof}

If we repeat the left shuffle $k$ times we have:
$L^k(i) = n^ki + n^k - 1\pmod{2n+1}$.
If $k$ is the order of $L$, then $n^ki + n^k - 1 \equiv i \pmod{2n+1}$ for all $i$. This implies $n^k \equiv 1\pmod{2n+1}$. In other words, the order of $L$ is the order of $n$ in $\mathbb{Z}_{2n+1}^{*}$. Equivalently, the order of $L$ is the order of the inverse of $n$ in $\mathbb{Z}_{2n+1}^{*}$, which is $-2$.

    Next consider the right shuffle. This takes slightly more work because we must consider the inner cards of the deck separately from the top and bottom cards (cards 0 and $2n-1$). Let's begin with the cards with indices $0<i<2n-1$. The right shuffle formula for these cards is $R(i) = (n-1)i$, which is equivalent to $R(i) = -ni$, because $n-1 \equiv -n\pmod{2n-1}$. Composing $R$ with itself $k$ times, we have $R^k(i) = (-n)^ki$ for $0<i<2n-1$. We have $R^k(i) \equiv (-n)^ki \equiv i\pmod{2n-1}$ for all $0<i<2n-1$ if and only if $k$ is a multiple of the order of $-n$. Equivalently, $k$ is a multiple of the order of $-2$ in $\mathbb{Z}_{2n-1}^{*}$ (which is the inverse of $-n$ in $\mathbb{Z}_{2n-1}^{*}$). 
    
     Now we turn to the top and bottom cards of the deck (cards 0 and $2n-1$). Since the top and bottom cards swap places after one right shuffle, $k$ must be an even integer in order to have $R^k(0)=0$ and $R^k(2n-1) = 2n-1$. It follows that the order of $R$ is the order of $-2$ in $\mathbb{Z}_{2n-1}^{*}$ if that value is even. Otherwise, the order of $R$ is twice the order of $-2$ in $\mathbb{Z}_{2n-1}^{*}$.
\end{proof}
\begin{example}
Suppose we have 52 cards.  The order of the left shuffle will be the order of $-2$ in $\mathbb{Z}_{53}^{*}$, which is $52$. The order of $-2$ in $\mathbb{Z}_{50}^{*}$ is 8, so the order of the right shuffle is 8. 
\end{example}

\begin{figure}
\begin{center}
\includegraphics[width=5cm]{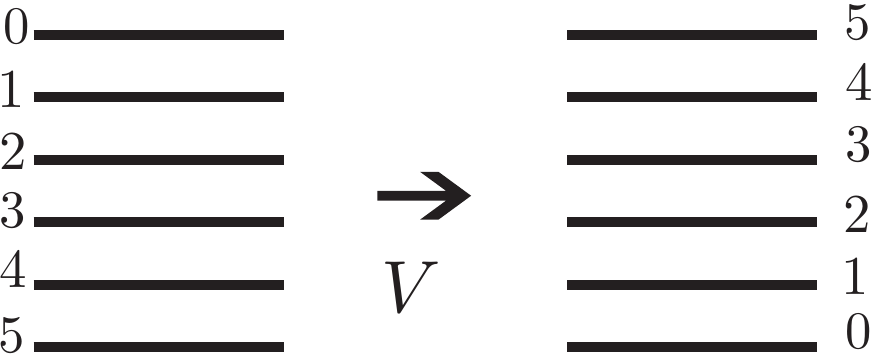} 
\end{center}
\caption{The shuffle $V$ on a deck of 6 cards.}\label{V}
\end{figure}

We now introduce a third type of shuffle, denoted by $V$. This shuffle simply reverses the order of the cards. See Figure~\ref{V} for an example. The shuffle $V$ changes the index of the $i^{th}$ card as follows:
$$V(i) = 2n-1-i.$$
Observe that $V$ has order 2. This new shuffle $V$ helps us define the connection between left and right shuffles and perfect shuffles $I$ and $O$. This connection is discussed in Proposition 2 of~\cite{Ensley}, as well. 

\begin{proposition}\label{connection}
Suppose a deck has 2n cards. Left and right shuffles are related to in and out shuffles and $V$ as follows:
$$L = V I^{-1} = I^{-1} V $$
$$R = V O^{-1} = O^{-1} V$$
\end{proposition}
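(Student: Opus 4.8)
The plan is to verify each of the four claimed identities by direct computation, using the explicit formulas already on hand: Lemma~\ref{formula} for $L$ and $R$, the displayed formulas for $I$ and $O$, and $V(i) = 2n-1-i$. Every map involved is affine-linear modulo $2n+1$ (for $L$ and $I$) or modulo $2n-1$ (for $R$ and $O$), so composing and inverting them is elementary modular arithmetic. The only bookkeeping subtlety is that $O$ and $R$ behave exceptionally on the outermost cards (index $2n-1$ for $O$, index $0$ for $R$), so those cases must be handled separately.

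For the two $L$-identities I would work in $\mathbb{Z}_{2n+1}$. There $2n-1 \equiv -2$, so $V(i) \equiv -2 - i$, and from $I(i) = 2i+1$ we get $I^{-1}(j) = (j-1)\cdot 2^{-1}$. Two facts make the simplification clean: $2^{-1} \equiv n+1$ and hence $(-2)^{-1} \equiv n$ in $\mathbb{Z}_{2n+1}^{*}$ (the latter was already observed in the proof of Lemma~\ref{formula}). Substituting, one computes $V I^{-1}(j) = -2 - (j-1)2^{-1}$ and $I^{-1} V(j) = \bigl((-2-j)-1\bigr)2^{-1}$, and in each case a short reduction turns the expression into $nj + (n-1)$, which is exactly $L(j)$ by Lemma~\ref{formula}. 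This proves $L = V I^{-1}$ and $L = I^{-1} V$ simultaneously (so in particular $I$ commutes with $V$).

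For the two $R$-identities I would work in $\mathbb{Z}_{2n-1}$, where $2n-1 \equiv 0$ and $2n \equiv 1$, so $V(i) \equiv -i$ and $2^{-1} \equiv n$. On the inner cards $0 < i < 2n-1$ the argument mirrors the previous one: from $O(i) = 2i$ we get $O^{-1}(j) = nj$, so both $V O^{-1}(j)$ and $O^{-1} V(j)$ reduce to $-nj \equiv (n-1)j = R(j) \pmod{2n-1}$. It then remains to check the two exceptional indices $0$ and $2n-1$ by hand: both $V O^{-1}$ and $O^{-1} V$ send $0 \mapsto 2n-1$ and $2n-1 \mapsto 0$, matching $R(0) = 2n-1$ and $R(2n-1) = 0$.

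I expect the only real care is needed in this last step — correctly matching the exceptional behaviour of $O$ and $R$ at the top and bottom of the deck, and checking that when $O^{-1}(j)$ or $V(j)$ is reduced modulo $2n-1$ it does not inadvertently land on an exceptional index, so that the generic affine formula genuinely applies to the composite. Everything else is routine substitution.
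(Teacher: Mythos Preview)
Your proposal is correct and uses the same underlying idea as the paper: direct modular-arithmetic verification using the explicit formulas from Lemma~\ref{formula}. The organization differs slightly. The paper computes $LI(i)$ and $IL(i)$ and shows both equal $V(i)$, then deduces $L = VI^{-1} = I^{-1}V$; this avoids ever writing down $I^{-1}$ explicitly. You instead invert $I$ first and then verify $VI^{-1}(j) = I^{-1}V(j) = L(j)$ directly. Both routes are equally elementary. Your treatment of the $R$-identities is in fact more complete than the paper's, which simply asserts ``a similar process'' and leaves it to the reader; your explicit handling of the exceptional indices $0$ and $2n-1$ is exactly the bookkeeping that is needed there.
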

\begin{proof}
    We first prove that $L I = I L$. Using the formulas for the left shuffle and the in shuffle, we compute $L(I(i)) = n(2i+1) + n - 1 = 2ni + 2n - 1 \pmod{2n+1}$ and $I(L(i)) = 2(ni + n - 1) + 1 = 2ni + 2n - 1 \pmod{2n+1}$. The expressions for $LI$ and $IL$ are equal, as desired. Next, observe that: $$L(I(i))=2ni + 2n - 1 \equiv 2ni + i + 2n - 1 - i \equiv 2n - 1 - i =V(i)\pmod{2n+1}.$$ 
    Thus, we have proved that $L I  = I L = V$. Equivalently, $L = V I^{-1} = I^{-1} V. $ 
    
    Using a similar process, we can also prove  $R = V O^{-1} = O^{-1} V.$ We leave this to the reader.
\end{proof}

Observe from the above lemma that a left shuffle is ``close" to being the inverse of an in shuffle (up to reversing the order of the deck), and the right shuffle is similarly close to being the inverse of an out shuffle. 

In Section~\ref{groups}, we will be interested in writing the shuffle $V$ either as a product of perfect shuffles or as a product of unshuffles. The following lemma describes a special case where this can be done easily, which will be useful to us in Theorem~\ref{special}.

\begin{lemma}\label{relation}
If $V = I^y$ where $y$ is even, then $V = L^y$.
\end{lemma}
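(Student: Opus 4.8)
The plan is to reduce everything to powers of the single shuffle $I$, using the relations already in hand. By Proposition~\ref{connection} we have $L = I^{-1}V$, and the proof of that proposition in fact shows $LI = IL = V$; from $LI = IL$ it follows that $I$ and $V$ commute as well, since $IV = I(LI) = (IL)I = (LI)I = LI^2 = (LI)I = VI$. Finally, $V^2 = \mathrm{id}$ because reversing the deck twice restores it. These three facts (namely $L = I^{-1}V$, $IV = VI$, and $V^2 = \mathrm{id}$) are all we will need.

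First I would substitute the hypothesis $V = I^y$ into $L = I^{-1}V$ to obtain $L = I^{y-1}$. In particular $L$ is itself a power of $I$, so $L^y = I^{y(y-1)}$.

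Next I would compare exponents. The desired conclusion $L^y = V$ is the statement $I^{y(y-1)} = I^{y}$, equivalently $I^{y(y-2)} = \mathrm{id}$. Since $V^2 = \mathrm{id}$, we have $I^{2y} = (I^y)^2 = V^2 = \mathrm{id}$, so the order of $I$ divides $2y$; and because $y$ is even, $y(y-2) = 2y\cdot\tfrac{y-2}{2}$ is a multiple of $2y$. Hence $I^{y(y-2)} = \mathrm{id}$, which gives $L^y = I^{y(y-1)} = I^{y} = V$.

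I do not expect a genuine obstacle here: the only real content is noticing that the parity hypothesis on $y$ is exactly what makes $y(y-2)$ divisible by $2y$, i.e. exactly what lets the relation $V^2 = \mathrm{id}$ absorb the discrepancy between $L^y$ and $V$. For brevity I might instead package the computation as $L^y = (I^{-1}V)^y = I^{-y}V^y$ (legitimate since $I$ and $V$ commute), then note $V^y = \mathrm{id}$ since $y$ is even, while $I^{-y} = (I^y)^{-1} = V^{-1} = V$, giving $L^y = V$ directly.
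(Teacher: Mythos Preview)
Your argument is correct, and the alternative you give at the end --- writing $L^y = (I^{-1}V)^y = I^{-y}V^y = V^{-1}\cdot \mathrm{id} = V$ using that $I$ and $V$ commute and $y$ is even --- is precisely the paper's proof. Your first version via $L = I^{y-1}$ and the divisibility $2y \mid y(y-2)$ is a harmless repackaging of the same idea.
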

\begin{proof}
By Proposition~\ref{connection}, $L = I^{-1}V=VI^{-1}$. Using this and the facts that $V = I^y$, $y$ is even, and $V^2$ is the identity, we compute:
$L^y = (I^{-1}V)^y = (I^y)^{-1}V^y =V. $ 
\end{proof}

A valuable property of perfect shuffles goes as follows. Take two cards that are equidistant from the center of the deck. After an in or out shuffle, the two cards remain equidistant from the center of the deck. We make this precise in the following definition.

\begin{definition}
A permutation $\sigma$ of $\{0,1,\ldots, 2n-1\}$ {\bf preserves central symmetry} if for every $i, j\in \{0,1,\ldots, 2n-1\}$ such that $i + j = 2n-1$, we have $\sigma(i) + \sigma(j) = 2n-1$.
\end{definition}

Among magicians, this principle of preserving central symmetry is termed {\em stay-stack}. The fact that perfect shuffles preserve central symmetry was first pointed out in 1957 by Russell Duck, a Pennsylvania policeman (see Section 3 of \cite{Diaconis}), and the observation can be exploited to create all sorts of card tricks. It is straightforward to show that unshuffles have this property, as well.

\begin{proposition}
Left and right shuffles preserve central symmetry.
\end{proposition}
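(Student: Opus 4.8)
The plan is to deduce this from Proposition~\ref{connection} together with the classical (and easily verified) fact, recalled just above, that the perfect shuffles $I$ and $O$ preserve central symmetry. First I would record that the permutations of $\{0,1,\ldots,2n-1\}$ preserving central symmetry form a subgroup of $S_{2n}$: closure under composition is immediate from the definition, and closure under inverses follows by applying the defining condition to the preimages $\sigma^{-1}(i)$ and $\sigma^{-1}(j)$. Next I would check the reversal shuffle $V$: if $i+j=2n-1$, then $V(i)+V(j) = (2n-1-i)+(2n-1-j) = 2(2n-1)-(i+j) = 2n-1$, so $V$ preserves central symmetry. Since $L = VI^{-1}$ and $R = VO^{-1}$ by Proposition~\ref{connection}, both $L$ and $R$ are composites of central-symmetry-preserving permutations, hence preserve central symmetry.

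If one prefers a self-contained argument that does not cite the stay-stack property of $I$ and $O$, one can instead compute directly from Lemma~\ref{formula}. Suppose $i+j=2n-1$. Working in $\mathbb{Z}_{2n+1}$, $L(i)+L(j) = n(i+j) + 2(n-1) = 2n^2+n-2$, and since $2n^2+n-2 = (2n-1) + (2n+1)(n-1)$ this is congruent to $2n-1$. Because $L(i),L(j)\in\{0,\ldots,2n-1\}$, their sum lies in $[0,4n-2]$, and the unique element of that interval congruent to $2n-1$ modulo $2n+1$ is $2n-1$ itself; hence $L(i)+L(j)=2n-1$. For $R$ the pair $\{0,2n-1\}$ must be handled separately, using $R(0)=2n-1$ and $R(2n-1)=(n-1)(2n-1)\equiv 0 \pmod{2n-1}$; for $0<i,j<2n-1$ one runs the same computation modulo $2n-1$, where $R(i)+R(j) \equiv (n-1)(2n-1) \equiv 0$, and the range bound again forces the sum to be exactly $2n-1$.

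Neither route has a genuine obstacle. The only point requiring care is the passage from a congruence modulo $2n\pm1$ to an honest equality of integers in $\{0,\ldots,2n-1\}$ — which is exactly why the range estimate is needed — together with the bookkeeping for the two outermost cards in the case of $R$. I would present the first (group-theoretic) argument as the main proof, since it is shortest and reuses Proposition~\ref{connection} directly, and perhaps remark that the direct computation is also available.
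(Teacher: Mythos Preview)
Your main argument is essentially identical to the paper's proof: both cite the stay-stack property of $I$ and $O$, observe that $V$ preserves central symmetry, and then invoke Proposition~\ref{connection} to write $L$ and $R$ as compositions of central-symmetry-preserving permutations. You add a little more structural detail (explicitly noting that such permutations form a subgroup), but the substance is the same. The self-contained alternative via Lemma~\ref{formula} is correct and does not appear in the paper; it is a nice backup, though the bookkeeping for $R$ on the inner cards deserves the extra sentence ruling out the boundary sums $0$ and $4n-2$ rather than just ``the range bound.''
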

\begin{proof}
Perfect shuffles $I$ and $O$ preserve central symmetry. Hence so do their inverses $I^{-1}$ and $O^{-1}$. The shuffle $V$ has the effect of swapping the cards in each centrally symmetric pair, and so $V$ also preserves central symmetry. Left and right shuffles are compositions of $V, I^{-1}$, and $O^{-1}$ (Proposition~\ref{connection}), so $L$ and $R$ must also preserve central symmetry.
\end{proof}

\section{Elmsley's Problem with unshuffles}\label{elmsleysection}

In the 1950's, Scottish computer programmer and magician Alex Elmsley posed a problem which grew popular and has since been given his name:
\begin{quote}{\bf Elmsley's Problem.}
Is it possible to move the top card in the deck to any given position via perfect shuffles? 
\end{quote}
Elmsley himself found a clever solution, which he published in the card magazine {\em Ibidem} (No. 11, September 1957, see also~\cite{diaconis2011, Diaconis, Morris}). The solution is as follows.

\begin{theorem}[Solution to Elmsley's Problem]
Begin with a deck of $2n$ cards. To move the top card to position $i$, express $i$ in binary, and then, reading the binary expression from left to right, let 1 represent an in shuffle and let 0 represent an out shuffle. Performing this indicated sequence of in and out shuffles in order from left to right will move card $0$ to position $i$.
\end{theorem}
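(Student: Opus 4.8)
The plan is to follow the position of the top card (card $0$) through the prescribed sequence of shuffles and recognize that, as long as this card stays in the low-index range, the in and out shuffles act on its index exactly like the binary operations ``double and append a $1$'' and ``double and append a $0$''. Concretely, write $i$ in binary as $i = \sum_{j=0}^{k-1} b_j 2^j$ (with no leading zeros, so $b_{k-1}=1$ when $i\neq 0$), and for $0 \le m \le k$ set $a_m = \lfloor i/2^m \rfloor = \sum_{j=m}^{k-1} b_j 2^{j-m}$, so that $a_k = 0$, $a_0 = i$, and $a_{m-1} = 2a_m + b_{m-1}$. The statement I would prove, by induction on decreasing $m$, is: after performing the shuffles indicated by $b_{k-1}, b_{k-2}, \dots, b_m$ in that order, card $0$ occupies index $a_m$.

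The base case $m=k$ is immediate, since no shuffle has yet been performed and card $0$ sits at index $0 = a_k$. For the inductive step, suppose card $0$ is at index $a_m$ with $m \ge 1$ and we apply the shuffle dictated by $b_{m-1}$. Using the formulas $I(j) \equiv 2j+1 \pmod{2n+1}$ and $O(j) \equiv 2j \pmod{2n-1}$ recorded just before Lemma~\ref{formula}, the new index of card $0$ becomes $2a_m + 1$ if $b_{m-1}=1$ and $2a_m$ if $b_{m-1}=0$; in both cases it equals $a_{m-1}$, provided no modular reduction occurs and the exceptional value $O(2n-1)=2n-1$ is not invoked. Taking $m=0$ then places card $0$ at index $a_0 = i$, which is the theorem.

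The only step requiring genuine care — and hence the main obstacle — is checking that those modular reductions and the special branch of $O$ never interfere during the induction. For this I would observe that whenever $m \ge 1$ the current index satisfies $a_m = \lfloor i/2^m \rfloor \le \lfloor (2n-1)/2 \rfloor = n-1$; thus $a_m \neq 2n-1$, so the exceptional case of $O$ is never used, and moreover $2a_m \le 2n-2 < 2n-1$ while $2a_m + 1 \le 2n-1 < 2n+1$, so neither $I$ nor $O$ wraps around. I would finish by noting that the corner cases are harmless: if $i = 0$ the binary expansion is empty, no shuffles are performed, and card $0$ stays in place; and if one permits leading zeros, each initial out shuffle simply fixes card $0$ (since $O(0)=0$), in agreement with the formula.
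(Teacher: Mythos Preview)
Your proof is correct and is essentially the standard argument: track the top card's index, observe that an out shuffle doubles it and an in shuffle doubles it and adds one, and verify via the bound $a_m \le n-1$ for $m\ge 1$ that no modular wraparound or exceptional branch of $O$ ever intervenes.

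There is nothing to compare against, however, because the paper does not prove this theorem. It is stated as a known result attributed to Elmsley, with references to \cite{diaconis2011, Diaconis, Morris}, and the paper then moves on to its own contribution (Theorem~\ref{elmsley}) for unshuffles. Your write-up would serve perfectly well as the omitted classical proof.
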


This slick solution to Elmsley's Problem encouraged quite a bit of related work. Paul Swinford discovered that in the special case where the deck has $2^k$ cards, the sequence of shuffles described in the above proposition swaps the places of the top card and the $i^{th}$ card. More generally, he observed that a sequence of shuffles that brings card $i$ to position $j$ will also move card $j$ to position $i$~\cite{swinford1,swinford2}. Later, Ramnath and Scully described a way to move card $i$ to card $j$ using perfect shuffles for any deck of $2n$ cards~\cite{Ramnath}. Diaconis and Graham found a solution to the inverse of Elmsley's Problem (that is, bringing card $i$ to the top of the deck via perfect shuffles)~\cite{DiaconisGraham}. Elmsley's Problem has also been solved for so-called generalized perfect shuffles~\cite{Medvedoff}. 

Performing this trick with perfect shuffles is an impressive feat.\footnote{See the trick performed and discussed here: \url{https://youtu.be/Y2lXsxmBx7E}} Having the option to use unshuffles will make it easier. We give a solution to Elmsley's Problem for unshuffles in the special case where the deck has $2^k$ cards. We first prove a lemma determines the location of the $i^{th}$ card after a left or right shuffle in terms of the binary representation of $i$.

\begin{lemma}\label{binary}
    Suppose we have a deck of $2^k$ cards for some $k \geq 1$. Write $i$ where $0\leq i\leq 2n-1$ in binary notation: $i = x_{k-1}~x_{k-2}\cdots x_1~x_0$, and let  $\overline{x}_j = 1-x_j$. Using the binary expansion, left and right shuffles move the $i^{th}$ card to a new position as follows:

    \vspace{-1cm}
    \begin{align*}
        &L(x_{k-1}~x_{k-2}\cdots x_1~x_0) = x_{0}~\overline{x}_{k-1}~\overline{x}_{k-2} \cdots \overline{x}_{2}~\overline{x}_{1}, \\
        &R(x_{k-1}~x_{k-2}\cdots x_1~x_0)= \overline{x}_{0}~\overline{x}_{k-1}~\overline{x}_{k-2} \cdots \overline{x}_{2}~\overline{x}_{1}.
    \end{align*}  
\end{lemma}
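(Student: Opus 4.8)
The plan is to work directly from the closed-form formulas for $L$ and $R$ established in Lemma~\ref{formula}, specializing to $2n = 2^k$, and to track what each formula does to the binary digits of $i$. Recall $L(i) \equiv ni + (n-1) \pmod{2n+1}$ with $n = 2^{k-1}$, and for $i \neq 0$, $R(i) \equiv (n-1)i \pmod{2n-1}$. Since $L$ and $R$ differ only by the $V$-correction $R = VO^{-1}$, $L = VI^{-1}$ (Proposition~\ref{connection}), and since the claimed right-hand sides for $L$ and $R$ differ only in the leading digit ($x_0$ versus $\overline{x}_0 = 1 - x_0$), it suffices to prove the $L$ formula carefully and then deduce the $R$ formula; I will say more on that reduction below.

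For the $L$ formula I would argue from the piecewise description in Equation~\eqref{L}: $L(i) = -\tfrac12 i + (n-1)$ when $i$ is even, and $L(i) = -\tfrac12(i+1) + 2n$ when $i$ is odd. The parity of $i$ is exactly its low-order bit $x_0$. When $x_0 = 0$, dividing $i = x_{k-1}\cdots x_1 0$ by $2$ is the right shift $x_{k-1}\cdots x_1$ (a $(k-1)$-bit number), and then $n - 1 - (i/2)$ takes the bit-complement of that $(k-1)$-bit string (since $n - 1 = 2^{k-1} - 1$ is the all-ones string of length $k-1$), giving $\overline{x}_{k-1}\cdots \overline{x}_1$; prepending the high bit $0 = x_0$ yields exactly $x_0\,\overline{x}_{k-1}\cdots\overline{x}_1$. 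When $x_0 = 1$, the number $(i+1)/2$ is the right shift of $i+1$; one checks $i + 1 = x_{k-1}\cdots x_1 0 + \text{(carry handling)}$, but more cleanly, $-\tfrac12(i+1) + 2n = 2n - \tfrac{i+1}{2}$, and reducing mod $2n+1$ one rewrites this as $n + (n - \tfrac{i+1}{2}) $; the bracketed part is the bit-complement of the $(k-1)$-bit shift, and the leading $n = 2^{k-1}$ contributes the top bit $1 = x_0$. So in both parity cases the result is $x_0$ followed by the complemented reverse-free shift of the remaining bits, which is the asserted formula. The only genuine bookkeeping is making sure the modular reductions land in $\{0,\ldots,2^k-1\}$ and that "right shift then complement the low $k-1$ bits, then restore the top bit" is being applied consistently; this is the step where an off-by-one in the odd case is the main hazard.

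For $R$, I would invoke the remark already recorded after Lemma~\ref{formula}: a right shuffle swaps cards $0$ and $2n-1$ and performs a left shuffle on the inner $2n-2$ cards. In binary, card $0 = 0\cdots0$ and card $2n - 1 = 1\cdots1$; the $L$ formula applied formally to $0\cdots0$ gives $0\,1\cdots1$, which is not $1\cdots1$, and applied to $1\cdots1$ gives $1\,0\cdots0$, not $0\cdots0$ — so the two boundary cases are exactly where $L$ and $R$ disagree, and one verifies by hand that toggling the leading bit $x_0 \mapsto \overline x_0$ corrects precisely these two values and agrees with $L$ elsewhere (equivalently, that $R(i)$ and $L(i)$ have the same last $k-1$ bits for all $i$, differing only in the top bit, which follows from $R(i) - L(i) \equiv \pm n \pmod{2n\pm1}$ in the even/odd cases as computed in the proof of Lemma~\ref{formula}). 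I expect the whole argument to be short; the principal obstacle is purely notational — keeping straight which digit positions are complemented and that no \emph{reversal} of the inner block occurs (the formula shifts and complements but does not reverse), so I would double-check the claimed output against a small case such as $k = 3$ before finalizing.
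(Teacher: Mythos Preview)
Your approach is correct and genuinely different from the paper's. The paper works entirely in modular arithmetic: it plugs the binary expansion of $i$ into $L(i)\equiv 2^{k-1}i+(2^{k-1}-1)\pmod{2^k+1}$, uses $2^k\equiv -1\pmod{2^k+1}$ to reduce the high powers, and reads off the bit pattern; it then repeats the computation independently for $R$ using $R(i)\equiv -2^{k-1}i\pmod{2^k-1}$ and $2^k\equiv 1\pmod{2^k-1}$. You instead work from the piecewise integer formula~\eqref{L} for $L$, interpreting ``subtract the right-shift from $n-1$'' as bit-complementation of the low $k-1$ bits, and then derive the $R$ formula from the $L$ formula via the exact integer identity $R(i)=L(i)\pm n$ recorded in the proof of Lemma~\ref{formula}. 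Your route avoids the modular reductions and makes the bit interpretation more transparent; the paper's route is more uniform and treats $L$ and $R$ symmetrically.

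One point to clean up: your $R$ paragraph leads with the structural remark that $R$ swaps $0$ and $2n-1$ and acts as a left shuffle on the inner $2n-2$ cards, and speaks of toggling the top bit ``correcting precisely these two values and agreeing with $L$ elsewhere.'' That framing is misleading, since in fact $R(i)$ and $L(i)$ differ in the top bit for \emph{every} $i$, not just the two boundary cards. The argument you actually need is the one you put in parentheses: $R(i)-L(i)=\pm n=\pm 2^{k-1}$ (as integers, with sign given by the parity of $i$), which together with the fact that the top bit of $L(i)$ is $x_0$ immediately gives that the top bit of $R(i)$ is $\overline{x}_0$ while the remaining bits are unchanged. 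Drop the boundary-case discussion and lead with that.
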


\begin{proof}
    Using our left shuffle formula (Lemma~\ref{formula}) and the fact that $2n = 2^k$, we have $L(i) = 2^{k-1}i + (2^{k-1}-1) \pmod {2^k+1}.$ 
    
    Plug in the value $i=x_{k-1}\cdot2^{k-1} + x_{k-2}\cdot2^{k-2} + \cdots + x_{0}\cdot2^{0}$: 
    $$L(i) = x_{k-1}\cdot2^{2k-2} + x_{k-2}\cdot2^{2k-3} + \cdots + x_{0}\cdot2^{k-1} + 2^{k-1} - 1 \pmod {2^k+1}.$$ 
The above expression can be simplified in two ways. First note that we can substitute $2^{k-1}-1 = 1\cdot2^{k-2}+1\cdot2^{k-3}+\cdots+1\cdot2^{0}$. Secondly, since $2^k \equiv -1\pmod{2^k+1}$, we can replace $x_{j}\cdot2^{j+(k-1)}$ with $-x_{j}\cdot2^{j-1}$. 
\begin{align*}
    L(i)&\equiv -x_{k-1}\cdot2^{k-2}-x_{k-2}\cdot2^{k-3}-\cdots-x_{1}\cdot2^{0}+x_{0}\cdot2^{k-1}+2^{k-1}-1\\ 
    &\equiv x_{0}\cdot2^{k-1}+ (1-x_{k-1})\cdot2^{k-2}+(1-x_{k-2})\cdot2^{k-3}+\cdots +(1-x_{1})2^{0}\\
    &\equiv x_{0}\cdot2^{k-1}+ \overline{x}_{k-1}\cdot2^{k-2}+\overline{x}_{k-2}\cdot2^{k-3}+\cdots +\overline{x}_{1}2^{0} \pmod{2^k+1}
\end{align*}
 where $\overline{x}_j = 1-x_j$.   
We have reached our desired conclusion: $$L(x_{k-1}~x_{k-2}\cdots x_1~x_0) = x_{0}~\overline{x}_{k-1}~\overline{x}_{k-2} \cdots \overline{x}_{2}~\overline{x}_{1}.$$ 

We use the same method for right shuffles. The right shuffle formula for $i>0$ is: 
$R(i) = (2^{k-1}-1)i \pmod {2^k-1}$.
Since $2^{k-1}-1 \equiv -2^{k-1} \pmod{2^k-1},$ we have
$$R(i) \equiv -2^{k-1}i \pmod {2^k-1}$$
for $i>0.$
Plugging in $i =x_{k-1}\cdot2^{k-1} + x_{k-2}\cdot2^{k-2} + \cdots + x_{0}\cdot2^{0}$ and using the fact that $2^k \equiv 1\pmod{2^k-1}$, we have:
\begin{align*}
   R(i) &\equiv (-2^{k-1})(x_{k-1}\cdot2^{k-1} + x_{k-2}\cdot2^{k-2} + \cdots +x_1\cdot2^1 + x_{0}\cdot2^{0})\\
   &\equiv -x_{k-1}\cdot2^{2k-2} -x_{k-2}\cdot2^{2k-3} - \cdots - x_1\cdot 2^{k}-x_{0}\cdot2^{k-1} \\
   &\equiv -x_{k-1}\cdot2^{k-2} -x_{k-2}\cdot2^{k-3} -\cdots -x_{1}\cdot2^{0}-x_{0}\cdot2^{k-1} \pmod{2^k-1}
\end{align*}
Now note that $1\cdot2^{k-1}+1\cdot2^{k-2}+\cdots+1\cdot2^{0}=2^{k}-1 \equiv 0\pmod{2^k-1}$. So we can add this to the above expression without changing its value:
\begin{align*}
R(i) &\equiv (1-x_{0})\cdot2^{k-1} +(1-x_{k-1})\cdot2^{k-2}+\cdots +(1-x_{1})\cdot2^{0} \\
&\equiv \overline{x}_{0}\cdot2^{k-1} +\overline{x}_{k-1}\cdot2^{k-2}+\cdots +\overline{x}_{1}\cdot2^{0} \pmod{2^k-1},
\end{align*}
 where $\overline{x}_j = 1-x_j$. This proves our result for $i>0$. 
 
 For the special case $i=0$, observe that we have the following, as desired: $$R(0) = 2^k-1 = 1 \cdot 2^{k-1} + 1 \cdot 2^{k-2} + \cdots + 1 \cdot 2^{0}.$$
\end{proof}
\begin{figure}
\begin{center}
\includegraphics[width=11cm]{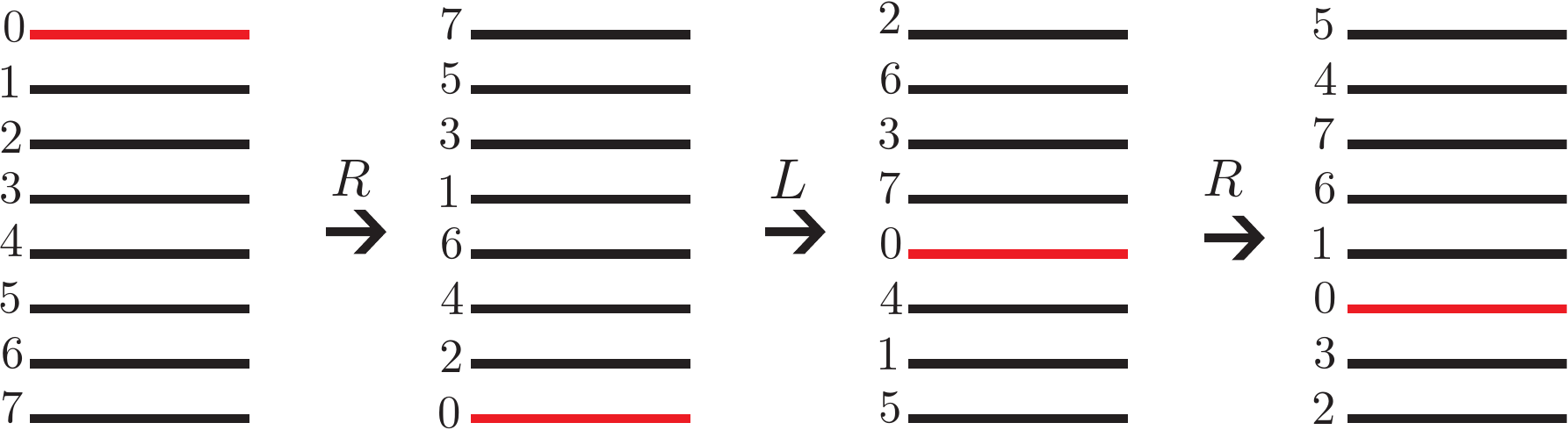} 
\end{center}
\caption{Unshuffles that move the top card to position 5.}\label{Elmsely8}
\end{figure}

We now give and prove our solution to Elmsley's Problem using unshuffles in the case that there are $2^k$ cards. In fact, we solve a more general problem. The process that we describe {\em swaps the places of card $i$ and card $j$ in $k$ shuffles for any $i$ and $j$}. Thus, we can specialize to the case $i=0$ to recover the solution to Elmsley's Problem.

\begin{theorem}\label{elmsley}
Suppose there are $2^k$ cards for some $k\geq 1$. To swap the cards in positions $i$ and $j$ using unshuffles, write $i$ and $j$ in binary, and compute $i\oplus j = x_{k-1}~x_{k-2}\cdots x_1~x_0$ (where $\oplus$ denotes the xor operation). Shuffle the deck with $k$ shuffles denoted in order by $S_0, S_1, \ldots, S_{k-1}$, where $S_r$ is defined as follows:
\begin{enumerate}
\item If $k$ is odd, 
$$ S_r = 
\begin{cases}
        L & \text{if } x_r=0\\
        R & \text{if } x_r =1 
\end{cases}  
    $$
\item If $k$ is even, 
$$ S_r = 
\begin{cases}
        R & \text{if } x_r=0\\
        L & \text{if } x_r =1 
\end{cases} 
    $$
\end{enumerate}
This sequence of shuffles will swap the cards in positions $i$ and $j$.
\end{theorem}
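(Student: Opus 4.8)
The plan is to work entirely with the binary description of $L$ and $R$ furnished by Lemma~\ref{binary}, exploiting the observation that on a deck of $2^k$ cards \emph{both} shuffles act on the $k$-bit string $\mathbf{x}=(x_{k-1},\dots,x_0)$ as \emph{affine} maps over $\mathbb{F}_2$ with the \emph{same} linear part. Reading Lemma~\ref{binary} coordinatewise, one gets $L(\mathbf{x})=M\mathbf{x}+\mathbf{b}$ and $R(\mathbf{x})=M\mathbf{x}+(\mathbf{b}+\mathbf{e}_{k-1})$, where $M$ is the permutation matrix of the cyclic shift $\ell\mapsto \ell-1\pmod k$ on the bit positions, $\mathbf{b}$ is the vector that equals $1$ in positions $0,1,\dots,k-2$ and $0$ in position $k-1$, and $\mathbf{e}_{k-1}$ is the standard basis vector supported in the most significant position. (The exceptional value $R(0)=2^k-1$ is consistent with this, since the string formula in Lemma~\ref{binary} already handles $i=0$.) The structural fact that makes everything work is that $M$ has order dividing $k$, so $M^k$ is the identity.

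Next I would compose the $k$ prescribed shuffles. Writing $S_r(\mathbf{x})=M\mathbf{x}+\mathbf{c}_r$ with $\mathbf{c}_r\in\{\mathbf{b},\,\mathbf{b}+\mathbf{e}_{k-1}\}$ and using the paper's right-to-left convention so that $S_0$ is the innermost map, a telescoping calculation gives
\[
(S_{k-1}\circ\cdots\circ S_0)(\mathbf{x}) \;=\; M^k\mathbf{x} + \sum_{r=0}^{k-1} M^{\,k-1-r}\mathbf{c}_r \;=\; \mathbf{x}+\mathbf{d}, \qquad \text{where } \mathbf{d}=\sum_{r=0}^{k-1} M^{\,k-1-r}\mathbf{c}_r ,
\]
since $M^k=I$. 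Thus the whole sequence acts on $\mathbb{F}_2^k$ as translation by $\mathbf{d}$, and such a translation swaps the binary strings of $i$ and $j$ exactly when $\mathbf{d}=i\oplus j$ (then $i\mapsto i\oplus\mathbf{d}=j$ and $j\mapsto j\oplus\mathbf{d}=i$). So the theorem reduces to verifying that the stated rule for the $S_r$ produces $\mathbf{d}=i\oplus j$.

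Finally I would evaluate $\mathbf{d}$. Put $\mathbf{c}_r=\mathbf{b}+\epsilon_r\mathbf{e}_{k-1}$, where $\epsilon_r=0$ if $S_r=L$ and $\epsilon_r=1$ if $S_r=R$. Because $M$ is a single $k$-cycle on the positions, $\sum_{s=0}^{k-1}M^s$ is the all-ones matrix $J$ over $\mathbb{F}_2$, so $\bigl(\sum_{s=0}^{k-1}M^s\bigr)\mathbf{b}$ equals $\bigl((k-1)\bmod 2\bigr)\,\mathbf{1}$ with $\mathbf{1}$ the all-ones vector; moreover $M^{\,k-1-r}\mathbf{e}_{k-1}=\mathbf{e}_r$, so $\sum_r \epsilon_r M^{\,k-1-r}\mathbf{e}_{k-1}$ is the vector whose bits are $(\epsilon_{k-1},\dots,\epsilon_0)$. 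Hence $\mathbf{d}=(\epsilon_{k-1},\dots,\epsilon_0)$ when $k$ is odd and $\mathbf{d}=\mathbf{1}+(\epsilon_{k-1},\dots,\epsilon_0)$ when $k$ is even. Setting $\mathbf{d}=i\oplus j=(x_{k-1},\dots,x_0)$ forces $\epsilon_r=x_r$ in the odd case and $\epsilon_r=\overline{x}_r$ in the even case, which are precisely the rules (1) and (2) in the statement; this completes the proof.

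I expect the only genuine obstacle to be recognizing the affine-over-$\mathbb{F}_2$ reformulation together with $M^k=I$, which collapses a product of $k$ affine maps into a single translation; after that, everything is bookkeeping about the all-ones matrix $J$ and the orbit of $\mathbf{e}_{k-1}$ under powers of $M$. A minor point to keep straight is the right-to-left composition convention, so that $S_0$ really is the innermost factor and the exponents $k-1-r$ come out as written.
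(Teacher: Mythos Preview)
Your proof is correct. The core idea coincides with the paper's: both arguments rest on Lemma~\ref{binary}, observe that $L$ and $R$ share the same cyclic-shift action on bit positions so that $k$ shuffles return every bit to its original slot, and then track the net flip of each bit to pin down the rule for $S_r$.

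Where you differ is in packaging. The paper argues informally by bit-counting: it follows bit $r$ through all $k$ shuffles, notes it is complemented either $k$ or $k-1$ times depending on whether $S_r=R$ or $S_r=L$, and reads off the parity. You instead make the affine-over-$\mathbb{F}_2$ structure explicit, writing $L$ and $R$ as $M\mathbf{x}+\mathbf{c}$ with a common linear part $M$ of order $k$, and then telescope the composition into a single translation $\mathbf{x}\mapsto\mathbf{x}+\mathbf{d}$. This buys you two things the paper leaves implicit: a one-line reason why the composite is an involution on the \emph{entire} deck (every translation over $\mathbb{F}_2^k$ is), and a mechanical evaluation of $\mathbf{d}$ via $\sum_s M^s=J$ and $M^{k-1-r}\mathbf{e}_{k-1}=\mathbf{e}_r$, which replaces the paper's case analysis on the parity of $k$ with a single formula $\mathbf{d}=((k{-}1)\bmod 2)\,\mathbf{1}+(\epsilon_{k-1},\dots,\epsilon_0)$. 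The paper's version is more elementary and readable without any linear-algebra setup; yours is more structural and makes the reason for the odd/even dichotomy transparent.
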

\begin{proof}
Lemma~\ref{binary} tells us that both a left shuffle and right shuffle have the effect of sliding the bits of the card index to the right by 1 position and moving the last bit to the front. Since there are $k$ bits total, it follows that performing $k$ shuffles (left or right or a mixture of both) will return all bits back to their original position (with some of the bits flipped, perhaps). Moreover, after these $k$ shuffles are done, each bit will have flipped a total of either $k$ or $k-1$ times. The exact number of times a given bit $x_r$ is flipped is pinned down by what type of shuffle the $r^{th}$ shuffle was. The bit $x_r$ will have flipped $k$ times if $S_r$ is a right shuffle, and $x_r$ will have flipped $k-1$ times if $S_r$ is a left shuffle. 

Using this we can design a sequence of $k$ shuffles $S_0, S_1, \ldots, S_{k-1}$ that turns the binary expansion for $j$ into that of $i$ and vice versa. First, identify the bits where $j$ and $i$ differ via computing: $i\oplus j= x_{k-1}~x_{k-2}\cdots x_1~x_0$. 

If $x_r = 0$, then we need no change to the $r^{th}$ bit of $j$, and we can plan our shuffles according to the parity of $k$ so that the $r^{th}$ bit is unchanged when the shuffles are finished.  Namely, if $k$ is odd, then set $S_r = L$ because that will cause the $r^{th}$ bit of $j$ to flip $k-1$ times and hence remain unchanged. If $k$ is even, set $S_r = R$, and the $r^{th}$ bit of $j$ will flip $k$ times and hence remain unchanged. 

Similarly, if $x_r = 1$, then the bits of $i$ and $j$ differ. We plan our shuffles according to the parity of $k$ so that the $r^{th}$ bit is flipped. If $k$ is odd, then set $S_r = R$, and the $r^{th}$ bit of $j$ will flip. If $k$ is even, set $S_r = L$.

This defines a sequence of $k$ shuffles $S_0, S_1, \ldots, S_{k-1}$. Performing these shuffles in order will change all the bits of $j$ to coincide with the bits of $i$ and vice versa. Therefore, the cards in positions $i$ and $j$ will trade positions after the sequence of shuffles.
\end{proof}

\begin{example}
We illustrate this solution to Elmsley's Problem with two examples, one with $k$ odd and the other with $k$ even. Suppose there are 8 cards and we want the top card to trade positions with the $5^{th}$ card. We compute $000_2 \oplus 101_2 = 101_2$. The shuffles should be done in the order of $R, L, R$. See Figure~\ref{Elmsely8}.

Now suppose there are 16 cards and we want to swap the cards in the $6^{th}$ position and the $11^{th}$ position. In binary, we write $6 = 0110_2$ and $11=1011_2$. We compute $0110_2 \oplus 1011_2 = 1101_2$. Theorem~\ref{elmsley} tells us the shuffles $L, R, L, L$ in that order will swap these two cards. See Figure~\ref{Elmsely16}.
\end{example}

\begin{figure}
\begin{center}
\includegraphics[width=11cm]{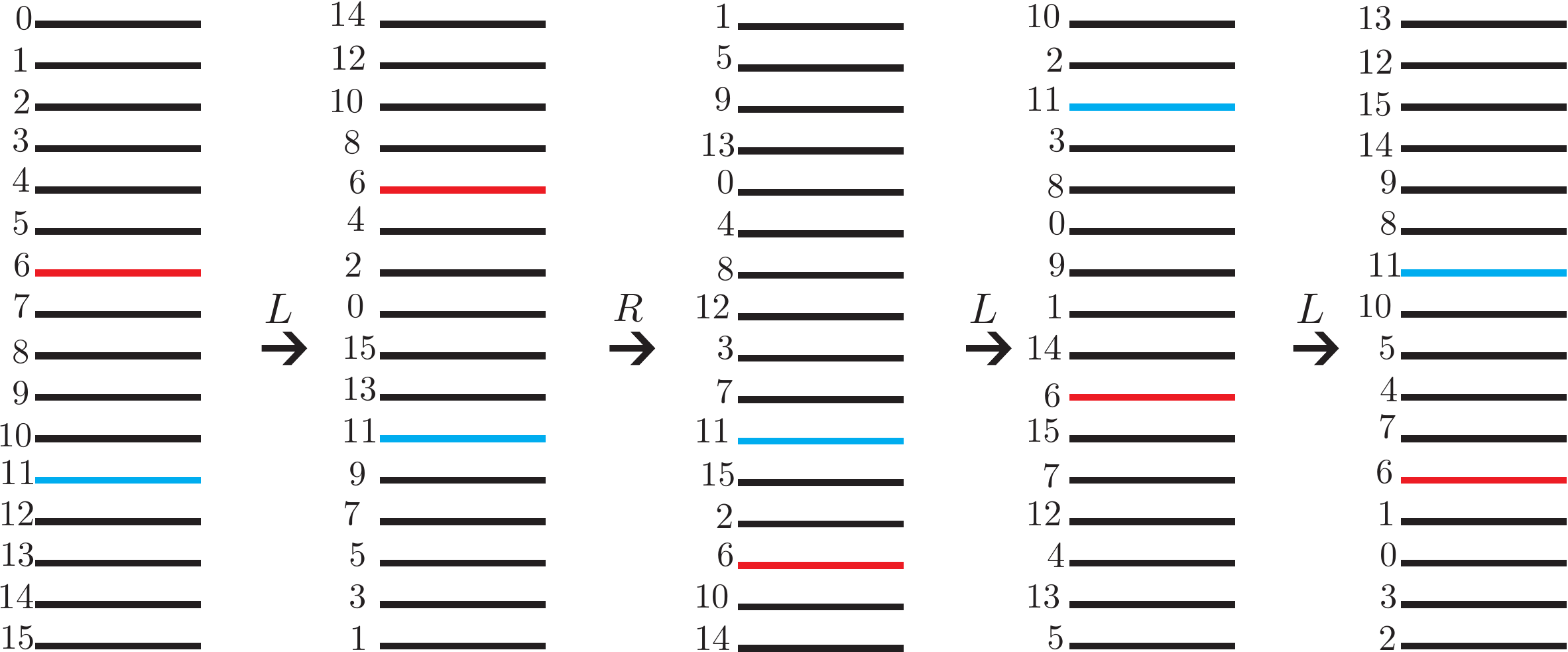} 
\end{center}
\caption{Unshuffles that swap the positions of card 6 and card 11.}\label{Elmsely16}
\end{figure}


\section{The permutation groups of unshuffles}\label{groups}

We now find structure of the permutation group $G = \langle L, R\rangle$. Through this, we will know exactly what card arrangements can be reached using unshuffles. In some (but not all) cases, the permutation group $\langle L, R\rangle$ coincides with $\langle I, O\rangle$. The fact that the groups often coincide is not so surprising, given the close relationship between perfect shuffles and unshuffles (Proposition~\ref{connection}). However, the details require careful work. We prove Theorem~\ref{introtheorem} in a sequence of three theorems (Theorems~\ref{special},~\ref{power}, and~\ref{main}). 

Let's begin by reviewing the permutation groups of perfect shuffles. Let $B_n$ be the group of all centrally symmetric permutations of $2n$ elements. This group can also be identified as the group of all signed $n\times n$ permutation matrices. Because $L, R, I$, and $O$ are all themselves centrally symmetric permutations, it follows that both $\langle L, R\rangle$ and $\langle I, O\rangle$ are subgroups of $B_n$.

We define several homomorphisms on $B_n$. First let
$$sgn: B_n\longrightarrow \{\pm 1\}$$
be the homomorphism such that $sgn(g)$ is the sign (or, parity) of the permutation $g$. Next, observe that while each permutation $g \in B_n$ is a permutation on $2n$ elements, $g$ induces a permutation on the $n$ centrally symmetric pairs. We define a homomorphism 
$$\phi:B_n \longrightarrow S_n$$ 
by assigning $\phi(g)$ to be the permutation that $g$ induces on the $n$ centrally symmetric pairs. Next we define 
$$\overline{sgn}: B_n\longrightarrow \{\pm 1\}$$
by assigning $\overline{sgn}(g)$ to be the sign of the permutation $\phi(g)\in S_n$. We easily pick up a final homomorphism as follows. Let $sgn\overline{sgn}: B_n\longrightarrow \{\pm 1\}$ be the homomorphism that assigns $g$ to the product $sgn(g)\overline{sgn}(g)$. 

Now we are ready to state the group structure of $\langle I, O\rangle$. The first three items are special cases (when the deck has size 12, 24, or a power of 2, respectively). Beyond that, the group structure is determined by the congruence class of $n$ modulo 4. For a proof and full details, see~\cite{Diaconis}.

\begin{theorem}\cite{Diaconis}\label{perfectgroups}
    The structure of the permutation group $\langle I, O\rangle$ on $2n$ cards is as follows:
    \begin{enumerate}
     \item If $2n = 12$, then $\langle I, O\rangle$ is isomorphic to the semi-direct product $\mathbb{Z}^6_2 \rtimes S_5$, where $S_5$ is the symmetric group on $5$ elements.
   \item If $2n=24$, then $\langle I, O\rangle$ is isomorphic to the semi-direct product $\mathbb{Z}^{11}_2 \rtimes M_{12}$, where $M_{12}$ is the Mathieu group of degree 12.
   \item If $2n = 2^k$, $\langle I, O\rangle$ is isomorphic to the semi-direct product $\mathbb{Z}^k_2 \rtimes \mathbb{Z}_k$.
  
   \item If $n \equiv 0 \pmod{4}$, $n> 12$, and $n$ is not a power of 2, then $\langle I, O\rangle$ is the intersection of the kernels of $sgn$ and $\overline{sgn}$ and has order $n!2^{n-2}$.
   
\item If $n \equiv 1 \pmod{4}$ and $n>1$, then $\langle I, O\rangle$ is the kernel $\overline{sgn}$ and has order $n!2^{n-1}$.

\item If $n \equiv 2 \pmod{4}$ and $n>6$, then $\langle I, O\rangle = B_n$ and has order $n!2^{n}$.
   
   \item If $n \equiv 3 \pmod{4}$, then $\langle I, O\rangle$ is equal to the kernel of $sgn\cdot\overline{sgn}$ and has order $n!2^{n-1}$.

    \end{enumerate}
\end{theorem}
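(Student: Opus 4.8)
The plan is to realize $G=\langle I,O\rangle$ inside the hyperoctahedral group $B_n\cong\mathbb{Z}_2^n\rtimes S_n$ and to squeeze it between a lower bound (arrangements one can explicitly produce as products of $I$ and $O$) and an upper bound (membership in the kernels of $sgn$ and $\overline{sgn}$, which $I$ and $O$ may or may not lie in). Let $\phi\colon B_n\to S_n$ be the projection recording the permutation induced on the $n$ centrally symmetric pairs, so that $\ker\phi\cong\mathbb{Z}_2^n$ is the group of ``pair-internal'' swaps and $\overline{sgn}=sgn\circ\phi$. Since $|G|=|\phi(G)|\cdot|G\cap\ker\phi|$, the problem decomposes into computing the image $H:=\phi(G)\le S_n$ and the kernel $K:=G\cap\ker\phi$, and the residue of $n$ modulo $4$ will enter only through sign computations that decide how large $H$ and $K$ are allowed to be.

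To compute $H$: the formulas $I(i)=2i+1$ and $O(i)=2i$ descend to multiplication-by-$2$ maps on the pairs, so the cycle types of $\phi(I)$ and $\phi(O)$ are governed by the multiplicative orders of $2$ modulo $2n+1$ and $2n-1$. In the generic cases I would use this cycle data to show $H$ is a primitive subgroup of $S_n$ and then invoke a classical Jordan-type theorem forcing any such group containing suitable cycles to be $A_n$ or $S_n$. The three exceptional deck sizes are precisely where this breaks down: for $2n=12$ and $2n=24$ one identifies $\langle I,O\rangle$ directly with the sporadic $2$-transitive groups attached to $\mathrm{PGL}_2(5)\cong S_5$ and the Mathieu group $M_{12}$ on $12$ points (e.g.\ via the classification of $2$-transitive groups), while for $2n=2^k$ one works directly with the action of $I$ and $O$ on binary digits — the same phenomenon underlying Elmsley's solution and Lemma~\ref{binary} — to obtain $\langle I,O\rangle\cong\mathbb{Z}_2^k\rtimes\mathbb{Z}_k$.

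For the non-exceptional $n$, it remains to identify $K=G\cap\ker\phi$. Since $K$ is normalized by $G$ and $\ker\phi\cong\mathbb{Z}_2^n$, $K$ is an $\mathbb{F}_2[H]$-submodule of the natural permutation module $\mathbb{F}_2^n$; when $H\supseteq A_n$ the only submodules are $0$, $\langle\mathbf{1}\rangle$, the even-weight hyperplane, and $\mathbb{F}_2^n$. Deciding which one — and simultaneously whether $H$ is $A_n$ or $S_n$ — is exactly where $n\bmod 4$ appears: using that the sign of a signed permutation equals $(-1)$ to the power of its number of sign changes, together with Zolotarev's lemma to evaluate $sgn$ and $\overline{sgn}$ on $I$ and $O$ as Jacobi symbols $\bigl(\tfrac{2}{2n\pm1}\bigr)=(-1)^{((2n\pm1)^2-1)/8}$, one reads off precisely which of the three homomorphisms $sgn$, $\overline{sgn}$, $sgn\cdot\overline{sgn}$ is trivial on $G$. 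This yields the upper bound: $G$ lies in the stated intersection of kernels. To see $G$ is not smaller, I would exhibit explicit short words in $I$ and $O$ — for instance conjugates of the commutator $[I,O]$ or of $O^{-1}I$ — realizing a transposition of two pairs and a sign vector of the required weight; comparing $|G|=|H|\cdot|K|$ against $n!2^{n}$, $n!2^{n-1}$, or $n!2^{n-2}$ then closes the argument.

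The hard part is the two-sidedness of the generic argument: membership in the kernels of $sgn$ and $\overline{sgn}$ is a clean homomorphism-plus-Zolotarev computation, but the matching lower bound needs either an explicit supply of shuffle words hitting enough coset representatives of $B_n$ or an appeal to a classification theorem to exclude intermediate subgroups, and the $sgn$/$\overline{sgn}$ parity bookkeeping must be carried out consistently across all four residues of $n$ modulo $4$. The three exceptional decks each demand their own identification of the relevant primitive group. For the full case-by-case treatment I would follow Diaconis, Graham, and Kantor~\cite{Diaconis}.
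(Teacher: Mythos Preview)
The paper does not prove this theorem at all: it is quoted as a known result with the sentence ``For a proof and full details, see~\cite{Diaconis},'' and no argument is given. Your proposal is therefore not competing with any proof in the paper; it is a sketch of the Diaconis--Graham--Kantor argument that ultimately defers to the same reference, which is exactly what the paper does (and you supply more detail than the paper bothers to). Your outline is broadly faithful to the structure of~\cite{Diaconis}: the decomposition $|G|=|\phi(G)|\cdot|K|$, the primitivity-plus-Jordan argument for $\phi(G)$ in the generic case, the separate handling of $2n=12,24,2^k$, and the parity bookkeeping via $sgn$ and $\overline{sgn}$ to pin down which kernel $G$ sits in are all present there. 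One small imprecision: in the $2n=12$ and $2n=24$ cases it is the image $\phi(G)\le S_n$ that is identified with $S_5$ (acting on the six points of $\mathrm{PG}(1,5)$) and $M_{12}$, not $\langle I,O\rangle$ itself; the full shuffle group is then the extension by the appropriate $\mathbb{Z}_2$-module, as the theorem states.
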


Now we turn to consider the group $G = \langle L,R \rangle$. We begin by proving $G = \langle L,R \rangle$ coincides with the perfect shuffle group in the cases where $2n$ is 12 or 24 (Theorem~\ref{special}) and in the case where $2n$ is a power of 2 (Theorem~\ref{power}). 
\newpage
\begin{theorem}\label{special}
   If $2n = 12$ or $2n = 24$, then $\langle L, R\rangle = \langle I, O\rangle$.
\end{theorem}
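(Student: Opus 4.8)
The plan is to show both inclusions $\langle L,R\rangle \subseteq \langle I,O\rangle$ and $\langle I,O\rangle \subseteq \langle L,R\rangle$. For the first inclusion, I would use Proposition~\ref{connection}, which gives $L = VI^{-1}$ and $R = VO^{-1}$. Hence it suffices to show $V \in \langle I,O\rangle$ when $2n = 12$ or $2n = 24$, since then $L = VI^{-1}$ and $R = VO^{-1}$ are both products of elements of $\langle I,O\rangle$. Conversely, once $V \in \langle L,R\rangle$ is established, the relations $I^{-1} = VL$ (equivalently $I = (VL)^{-1} = L^{-1}V$, using $V^2 = \mathrm{id}$) and $O^{-1} = VR$ show that $I,O \in \langle L,R\rangle$, giving the reverse inclusion. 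So the whole theorem reduces to the single claim: \emph{$V$ lies in both $\langle I,O\rangle$ and $\langle L,R\rangle$ when $2n \in \{12,24\}$.}

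To handle $V \in \langle I,O\rangle$: the order of the in shuffle $I$ on $2n$ cards is the order of $2$ in $\mathbb{Z}_{2n+1}^*$. For $2n=12$ this is the order of $2$ in $\mathbb{Z}_{13}^*$, and for $2n=24$ the order of $2$ in $\mathbb{Z}_{25}^*$; I would check that in each case this order is even, say $2m$, and that $I^m = V$. Indeed $I^m(i) = 2^m i + (2^m - 1) \pmod{2n+1}$, so $I^m = V$ exactly when $2^m \equiv -1 \pmod{2n+1}$, i.e. when $m$ is the order of $-1$ times... more precisely when $2^m \equiv -1$, which happens for $m$ equal to half the order of $2$ provided $-1$ is itself a power of $2$ modulo $2n+1$. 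For $2n+1 = 13$: $2^6 = 64 \equiv -1 \pmod{13}$, so $I^6 = V$ and $V = I^6 \in \langle I,O\rangle$ with exponent $6$ even. For $2n+1 = 25$: $2^{10} = 1024 \equiv -1 \pmod{25}$, so $I^{10} = V$ with exponent $10$ even. In both cases the hypothesis of Lemma~\ref{relation} is met (we have $V = I^y$ with $y$ even), so Lemma~\ref{relation} immediately gives $V = L^y \in \langle L,R\rangle$ as well. That single computation then yields both memberships simultaneously, and the theorem follows from the relations in the previous paragraph.

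The main obstacle, such as it is, is purely the arithmetic bookkeeping: confirming that $2$ has even order modulo $13$ and modulo $25$ and pinning down the exponent $y$ with $2^y \equiv -1$, so that Lemma~\ref{relation} applies. There is nothing structurally deep here — the content is entirely carried by Proposition~\ref{connection} (which converts the problem into a statement about $V$) and Lemma~\ref{relation} (which transfers a relation from the $I$-world to the $L$-world once the relevant exponent is even). I would present the computation of the orders of $2$ mod $13$ and mod $25$ explicitly, note that $V = I^6$ and $V = I^{10}$ respectively, invoke Lemma~\ref{relation} to get $V = L^6$ and $V = L^{10}$, and then close with the two-line argument showing each generating set contains the other. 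One should double-check the small edge case that $I^{-1}$ and $O^{-1}$ are genuinely in $\langle I,O\rangle$ (trivially true, as the group is finite) and that $O$ is similarly recoverable from $V$ and $R$ via $O^{-1} = VR$, which is the ``left to the reader'' half of Proposition~\ref{connection}.
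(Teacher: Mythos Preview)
Your proposal is correct and follows essentially the same route as the paper: compute $I^y(i) = 2^y i + 2^y - 1 \pmod{2n+1}$, verify $2^6 \equiv -1 \pmod{13}$ and $2^{10} \equiv -1 \pmod{25}$ so that $V = I^6$ (resp.\ $V = I^{10}$), then invoke Lemma~\ref{relation} to obtain $V = L^6$ (resp.\ $V = L^{10}$), and conclude both inclusions via Proposition~\ref{connection}. The paper's presentation is terser but the argument is identical.
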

\begin{proof}
   We first observe that the formula for $I^r$ is give by:
$$I^r(i) \equiv 2^ri + 2^{r-1} + 2^{r-2} + ... + 1 \equiv ~~2^ri + 2^r - 1 \pmod{2n+1}.$$
Now suppose that we have $2n=12$ cards. Observe that $$I^6(i) = 2^6(i) + 2^6-1 \equiv 11-i =V(i)\pmod{13}.$$ 
Therefore $V = I^6$. Lemma~\ref{relation} now implies that $V = L^6$. Because we can express $V$ in terms of $I$, Proposition~\ref{connection} implies that $\langle L, R\rangle \subseteq \langle I, O\rangle$. Moreover, because we can express $V$ in terms of $L$, it implies $\langle I,O\rangle \subseteq \langle L, R\rangle$. Therefore $\langle L, R\rangle = \langle I, O\rangle$.

   Similarly, for $2n=24$ cards we have
   $$I^{10}(i) = 2^{10}(i) + 2^{10}-1 \equiv 23-i = V(i)\pmod{25}.$$ Because $V = I^{10}$, Lemma~\ref{relation} implies $V = L^{10}$. The same argument for $12$ cards applies to $24$ cards, so $\langle L, R\rangle = \langle I, O\rangle$ for the $2n = 24$ case, as well.   
\end{proof}

\begin{theorem}\label{power}
Suppose $2n=2^k$ for some positive integer $k$. Then $\langle L,R \rangle = \langle I, O\rangle$.
\end{theorem}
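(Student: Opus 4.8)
The plan is to show that $V$ can be written as a product of perfect shuffles $I$ and $O$, and (in the same breath, via Lemma~\ref{relation} or a direct variant) as a product of left and right shuffles; once $V$ is in both groups, Proposition~\ref{connection} forces $\langle L,R\rangle=\langle I,O\rangle$ exactly as in the proof of Theorem~\ref{special}. So the whole theorem reduces to a statement about the shuffle $V$ on a deck of $2^k$ cards.

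First I would use the solution to Elmsley's Problem with unshuffles (Theorem~\ref{elmsley}, or more directly the bit-shift description in Lemma~\ref{binary}) to get a clean handle on $V$ in terms of $L$ and $R$. Recall $V(i)=2n-1-i$, which in binary on $2^k$ cards is precisely the bitwise complement $\overline{x}_{k-1}\,\overline{x}_{k-2}\cdots\overline{x}_0$. Lemma~\ref{binary} says one shuffle cyclically shifts the $k$ bits and complements $k$ or $k-1$ of them depending on whether the shuffle is $R$ or $L$. After $k$ shuffles the bits are back in their original positions, and bit $x_r$ has been complemented $k$ times if $S_r=R$ and $k-1$ times if $S_r=L$. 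To realize $V$ we need every bit complemented an odd number of times: if $k$ is odd this means taking all $k$ shuffles equal to $L$, so $V=L^k$; if $k$ is even it means taking all $k$ shuffles equal to $R$, so $V=R^k$. (Alternatively, apply Theorem~\ref{elmsley} with $i\oplus j$ equal to the all-ones string.) Either way, $V\in\langle L,R\rangle$.

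Next I would show $V\in\langle I,O\rangle$. The cleanest route mirrors Theorem~\ref{special}: using $I^r(i)\equiv 2^r i + 2^r-1\pmod{2n+1}$ with $2n=2^k$, I want an exponent $r$ with $2^r\equiv -1\pmod{2^k+1}$, since then $I^r(i)\equiv -i-2 \equiv 2^k-1-i = V(i)$. Because $2^k\equiv -1\pmod{2^k+1}$ already, $r=k$ works, giving $V=I^k$. (When $k$ is odd this also follows instantly from $V=L^k$ together with Lemma~\ref{relation}'s proof idea run in reverse; but the direct computation handles all $k$ uniformly.) Thus $V\in\langle I,O\rangle$ as well.

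Finally, assemble the pieces. By Proposition~\ref{connection}, $L=VI^{-1}$ and $R=VO^{-1}$, so since $V\in\langle I,O\rangle$ we get $L,R\in\langle I,O\rangle$, hence $\langle L,R\rangle\subseteq\langle I,O\rangle$. Conversely $I=VL^{-1}$ and $O=VR^{-1}$ (again Proposition~\ref{connection}), and since $V\in\langle L,R\rangle$ we get $I,O\in\langle L,R\rangle$, hence $\langle I,O\rangle\subseteq\langle L,R\rangle$. Therefore $\langle L,R\rangle=\langle I,O\rangle$. The only place that needs genuine care is the bit-counting argument establishing $V=L^k$ or $V=R^k$ — specifically tracking the parity of the number of complementations each bit undergoes and confirming the bits really do return to their starting positions after exactly $k$ shuffles — but this is exactly what Lemma~\ref{binary} was set up to deliver, so it should go through without difficulty.
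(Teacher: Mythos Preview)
Your overall strategy is correct and matches the paper's: show $V\in\langle I,O\rangle$ and $V\in\langle L,R\rangle$, then invoke Proposition~\ref{connection}. Your computation $V=I^k$ is exactly the paper's. However, you have the parity backwards in the bit-counting step: if $S_r=L$ then bit $r$ is complemented $k-1$ times, so when $k$ is odd that is an \emph{even} number of flips, not odd. The correct conclusion is $V=R^k$ for $k$ odd and $V=L^k$ for $k$ even (your own parenthetical appeal to Theorem~\ref{elmsley} with the all-ones string gives this, as does a direct check on $2n=4$ or $2n=8$). Since you only need $V\in\langle L,R\rangle$, this slip does not damage the argument, but the stated identities should be swapped.

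The paper reaches the same identities by a slightly different route: rather than Lemma~\ref{binary}, it computes $L^k=(VI^{-1})^k=V^kI^{-k}$ and $R^k=(VO^{-1})^k=V^kO^{-k}$ directly from Proposition~\ref{connection}, then uses $V=I^k$ and the fact that $O^k$ is the identity to read off $L^k=V$ (for $k$ even) and $R^k=V$ (for $k$ odd). Your bit-shift argument via Lemma~\ref{binary} is a perfectly good alternative and arguably more conceptual; the paper's version is more self-contained since it does not rely on the Section~\ref{elmsleysection} machinery.
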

\begin{proof}

Using the formula for $I^r(i)$ mentioned in the proof of Theorem~\ref{special} and setting $r=k$, we observe:
$$I^k(i) \equiv 2^ki + 2^k - 1 \equiv - i + 2^k - 1 =V(i) \pmod{2^k+1}.$$
Therefore $V = I^k.$ This implies that $L$ and $R$ can be written as combinations of $I$ and $O$ by Proposition~\ref{connection}. So then $\langle L, R\rangle \subseteq \langle I, O\rangle.$

Now we prove $\langle I, O\rangle \subseteq \langle L, R\rangle$, splitting into two cases according to the parity of $k$. Suppose that $k$ is even. Begin with the equation $L = VI^{-1}$ from Proposition~\ref{connection}. Composing both sides with themselves $k$ times, we have
$$L^k = (VI^{-1})(VI^{-1})\cdots(VI^{-1}) = V^k I^{-k} = I^{-k},$$ because $V$ and $I^{-1}$ commute and $k$ is even.  Plugging in $V=I^k$ and remembering that $V=V^{-1}$, we find that $L^k = V.$ This implies that $I$ and $O$ can be written in terms of $L$ and $R$ for $k$ even. Therefore $\langle I, O\rangle \subseteq \langle L, R\rangle$, which proves $\langle I, O\rangle = \langle L, R\rangle$ for $k$ even, as desired

Suppose $k$ is odd. Begin with $R = V O^{-1}$ (from Proposition~\ref{connection}) and compose both sides with themselves $k$ times. Remembering that $O^{-1}$ and $V$ commute and using the fact that $O^{-k}$ is the identity (because $O^k(i) = 2^ki \equiv i \pmod{2^k-1}$), we find
$$R^k = (VO^{-1})(VO^{-1})\cdots(VO^{-1}) = V^k O^{-k} = V^k = V.$$ 
Therefore when $k$ is odd, $V = R^k.$
Similar to before, we conclude $\langle I, O\rangle \subseteq \langle L, R\rangle$, which proves $\langle I, O\rangle = \langle L, R\rangle$ for $k$ odd, as desired
\end{proof}

Now that we have settled the above special cases, we have the more difficult task of considering what happens in general. We must establish some preliminary definitions and two lemmas first. 

Let $G^*$ denote the subgroup of $G=\langle L,R\rangle$ consisting of all shuffles in $G$ that leave the set $\{0, 1, \ldots, n-1\}$ invariant. Since all shuffles in $G^*$ also must preserve central symmetry, it follows that elements in $G^*$ can be expressed as $\sigma\sigma'$ where $\sigma$ is a permutation of $\{0, 1, \ldots, n-1\}$ and $\sigma'$ represents the corresponding permutation of the elements $\{0', 1', \ldots, (n-1)'\}$ where $r' = 2n-1-r$. So an element $\sigma\sigma'$ in $G^*$ is completely determined by $\sigma$. Using this notation throughout, we now prove two lemmas. 

\begin{lemma}\label{G*}
    Let $n> 1$ be such that $n$ is not a power of 2 and $n\neq 6, 12$. The group $G^*$ contains all permutations of the form $\sigma\sigma'$ where $\sigma$ is any even permutation of $\{0, 1, \ldots, n-1\}$ and $\sigma'$ is as defined above.
\end{lemma}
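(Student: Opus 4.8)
The plan is to prove the equivalent statement that every $\sigma\sigma'$ with $\sigma\in A_n$ already lies in $\langle L,R\rangle$; membership in $G^*$ is then automatic, since $\sigma\sigma'$ fixes $\{0,1,\dots,n-1\}$ setwise by construction. Write $A^{\Delta}=\{\sigma\sigma' : \sigma\in A_n\}$, an $A_n$-copy inside $B_n$. The route has three steps: (1) $A^{\Delta}\subseteq\langle I,O\rangle$, using the known structure of the perfect-shuffle group; (2) the subgroup $H\leq\langle I,O\rangle$ of all words of even length in $I^{\pm1},O^{\pm1}$ is contained in $\langle L,R\rangle$; (3) $A^{\Delta}$, having no subgroup of index $2$, must lie inside $H$. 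Chaining these gives $A^{\Delta}\subseteq H\subseteq\langle L,R\rangle$, hence $A^{\Delta}\subseteq G^*$.

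For step (1): if $\sigma$ is even then $\overline{sgn}(\sigma\sigma')=sgn(\sigma)=+1$ and $sgn(\sigma\sigma')=sgn(\sigma)\,sgn(\sigma')=+1$ (as $\sigma'$ has the same cycle type as $\sigma$), so $A^{\Delta}\subseteq\ker(sgn)\cap\ker(\overline{sgn})$. Once powers of $2$ and the values $6,12$ are removed, every remaining $n>1$ lands in one of cases (d)--(g) of Theorem~\ref{perfectgroups}, with the thresholds there ($n>12$ in (d), $n>6$ in (f), $n>1$ in (e)) met automatically; in each of those cases $\langle I,O\rangle\supseteq\ker(sgn)\cap\ker(\overline{sgn})$ (with $\langle I,O\rangle=B_n$ in case (f)). So $A^{\Delta}\subseteq\langle I,O\rangle$. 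This is precisely where the excluded sizes matter: when $2n$ is $12$, $24$, or a power of $2$, the image $\phi(\langle I,O\rangle)$ is only $S_5$, $M_{12}$, or $\mathbb{Z}_k$, too small to contain $A_n$, so $A^{\Delta}\not\subseteq\langle I,O\rangle$ there and the whole approach breaks.

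For step (2): by Proposition~\ref{connection}, $V^2=1$ and $V$ commutes with $I$ and $O$, so a short computation recovers every two-letter product in $I^{\pm1},O^{\pm1}$ as an element of $\langle L,R\rangle$ — for instance $L^2=I^{-2}$, $R^2=O^{-2}$, $LR=I^{-1}O^{-1}$, $L^{-1}R=IO^{-1}$, the rest being analogous — and these products generate $H$, so $H\subseteq\langle L,R\rangle$. For step (3): $H$ has index at most $2$ in $\langle I,O\rangle$, while $A^{\Delta}\cong A_n$ has no subgroup of index $2$ for any $n\geq 3$ ($A_3=\mathbb{Z}_3$ has none; $A_4$ has no subgroup of order $6$; $A_n$ is simple for $n\geq 5$), and the lemma's hypotheses force $n\geq 3$. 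Hence $[A^{\Delta}:A^{\Delta}\cap H]\leq 2$ gives $A^{\Delta}=A^{\Delta}\cap H\subseteq H$, and combining all three steps finishes the proof.

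The computations in step (2) are routine and step (1) is essentially a citation; the delicate point — and the main obstacle I expect — is the edge-case matching in step (1): checking that ``$n>1$, not a power of $2$, $n\neq 6,12$'' is exactly the range in which cases (d)--(g) of Theorem~\ref{perfectgroups} apply with their lower bounds coming for free. If one preferred not to invoke the classification of $\langle I,O\rangle$, an alternative is to produce one element of $G^*$ inducing a $3$-cycle on the $n$ centrally symmetric pairs, show the image of $G^*$ in $S_n$ is primitive, and apply Jordan's theorem — but extracting primitivity directly from the formulas of Lemma~\ref{formula} looks harder than the argument sketched here.
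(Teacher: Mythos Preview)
Your proof is correct and takes a genuinely different route from the paper's. The paper imports five explicit words in $I,O$ from \cite{Diaconis} --- namely $c$, $w$, $b$, $c'$, and $h(r)$ --- observes that each happens to have even length in the alphabet $\{I^{\pm1},O^{\pm1}\}$, so that under the substitution $I=VL^{-1}$, $O=VR^{-1}$ the $V$'s cancel in pairs and each word lands in $\langle L,R\rangle$; it then cites Lemmas~9 and 15--18 of \cite{Diaconis} to say these particular words already generate $A^{\Delta}$. You instead cite the final classification (Theorem~\ref{perfectgroups}) to get $A^{\Delta}\subseteq\langle I,O\rangle$ in one stroke, and replace the element-by-element length check with the structural observation that the even-length subgroup $H$ has index at most $2$ while $A_n$ admits no index-$2$ subgroup. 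Your argument is cleaner and makes transparent \emph{why} the paper's generators work (they are all even-length, which is what actually matters), at the cost of invoking a stronger statement from \cite{Diaconis} than the paper does; the paper's version is more constructive in that it hands you explicit $L,R$-words for the generators of $A^{\Delta}$. Your edge-case accounting in step~(1) is correct: with $n$ not a power of $2$ and $n\neq 6,12$, the residues $0,1,2,3$ mod $4$ force $n\geq 20$, $n\geq 5$, $n\geq 10$, $n\geq 3$ respectively, so the thresholds in cases (d)--(g) of Theorem~\ref{perfectgroups} are met.
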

\begin{proof}
In~\cite{Diaconis}, the authors introduce several useful permutations which are created with perfect shuffles and will be useful in our context, as well. Let $k$ and $r$ be positive integers and define:
\begin{align*}
c &= O(I^{-1}OIO^{-1})^2O^{-1}\\
w &=  O^{-1}Ic^{-1}O^{-1}Ic^2I^{-1}Oc^{-1}I^{-1}O\\
b &= (I^kO^{-k}I^{-1}O)^{-2}\\
c' &= ObO^{-1}\\
h(r) &= O^{-r}I^r 
\end{align*}

All of these permutations can be performed using left and right shuffles, as well. To see this, substitute $I = VL^{-1}$ and $O = VR^{-1}$ into the above expressions. Because $V$ commutes with both $L$ and $R$ and because $V^2$ is the identity, we are left with a product of left and right shuffles. 
    
 In Lemma 9 of~\cite{Diaconis}, the authors prove that when $n$ is odd, $c$ and $w$ generate all permutations $\sigma\sigma'$ where $\sigma$ is an even permutation of $\{0, 1, \ldots, n-1\}$.  Therefore our result is proved for $n$ odd.

Now suppose $n$ is even, $n$ is not a power of 2, and $n\neq 6,12$. Write $2n = 2^kv$ where $v>1$ and $v$ is odd. 
In Lemmas 15, 16, 17, and 18 of~\cite{Diaconis}, the authors prove that under the stated conditions on $n$, the shuffles $b, c', h(1), h(2), \ldots, h(k-1)$ generate all permutations $\sigma\sigma'$ where $\sigma$ is an even permutation of $\{0, 1, \ldots, n-1\}$. Therefore our result for left and right shuffles follows, as well.
\end{proof}

We must prove one more preliminary lemma, but first we establish some notation and make observations. Recall the homomorphism $\phi:B_n\longrightarrow S_n$ is defined by assigning $\phi(g)$ to be the permutation that $g$ induces on the $n$ centrally symmetric pairs. In~\cite{Diaconis}, the authors found that the parities of the permutations $I$, $O$, $\phi(I),$ and $\phi(O)$ depended on the congruence class of $n$ modulo $4$ (see Table 3 in~\cite{Diaconis}). Also remember that $V$ switches the card in position $i$ with the card in position $i'$ for $0 \leq i \leq n-1$, so $V$ is the product of $n$ transpositions:
$V = (0, 0')(1, 1')(2,2')\cdots(n-1,(n-1)').$ Therefore the parity of $V$ is $(-1)^n$. Also $\phi(V) = (1)$, the identity permutation. Hence $\overline{sgn}(V)=1$. These parities, along with the relation between perfect shuffles and unshuffles found in Proposition~\ref{connection}, help us construct Table~\ref{signs} for the parities of $L, R, \phi(L),$ and $\phi(R)$, which we will use frequently.

\begin{table}
\begin{center}
\setlength{\arrayrulewidth}{0.5mm}
\setlength{\tabcolsep}{10pt}
\renewcommand{\arraystretch}{1.5}

\begin{tabular}{ |p{2.7cm}|p{1.0cm}|p{1.0cm}|p{1.0cm}|p{1.0cm}|  }
\hline
 & $L$& $R$& $\phi(L)$& $\phi(R)$ \\
\hline
$n \equiv 0\pmod{4}$ & $~~1$ & $~~1$ & $~~1$ & $~~1$ \\
$n \equiv 1\pmod{4}$ & $~~1$ & $-1$ & $~~1$ & $~~1$ \\
$n \equiv 2\pmod{4}$ & $-1$ & $-1$ & $-1$ & $~~1$ \\
$n \equiv 3\pmod{4}$ & $-1$ & $~~1$ & $~~1$ & $-1$ \\
\hline
\end{tabular}
\end{center}
\caption{Parities of $L$, $R$, $\phi(L)$, $\phi(R)$ for congruence classes of $n$ modulo $4$.}\label{signs}
\end{table}

The kernel of the homomorphism $\phi:B_n \longrightarrow S_n$ is generated by all transpositions $(x,x')$ where $x \in \{0, 1, \ldots, n-1\}$ and $x' = 2n-1 - x$. These transpositions commute with each other. Hence $ker(\phi)$ is a group of order $2^n$. We denote the restriction of the homomorphism $\phi$ to $G$ by $\phi|_G$, and we denote the kernel of $\phi|_G$ by $K$. The following lemma finds the order of $K$. 

\begin{lemma}\label{K}
Let $n> 1$ be such that $n$ is not a power of 2 and $n\neq 6, 12$, and let $K$ denote the kernel of the homomorphism $\phi|_G$.
    If $n\equiv 0 \pmod{4}$, then $|K| = 2^{n-1}$. Otherwise, $|K| = 2^{n}$.
\end{lemma}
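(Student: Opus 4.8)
\textbf{Proof proposal for Lemma~\ref{K}.}

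The plan is to compute $|K|$ by identifying $K$ precisely as a subgroup of $\ker(\phi)$, which is the group of order $2^n$ generated by the commuting transpositions $(x,x')$ for $x\in\{0,1,\ldots,n-1\}$. I identify $\ker(\phi)$ with the vector space $\mathbb{Z}_2^n$, where the standard basis vector $e_x$ corresponds to the transposition $(x,x')$; the whole group $V$ corresponds to the all-ones vector $(1,1,\ldots,1)$. Since $K=G\cap\ker(\phi)$, the task is to determine which vectors in $\mathbb{Z}_2^n$ are realized by shuffles in $G=\langle L,R\rangle$. There are two containments to establish: a lower bound coming from explicitly exhibited elements of $G$ that lie in $\ker(\phi)$, and an upper bound coming from a homomorphism on $G$ that vanishes on $K$.

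For the lower bound, I would use Lemma~\ref{G*}: the subgroup $G^*$ contains every $\sigma\sigma'$ with $\sigma$ an even permutation of $\{0,\ldots,n-1\}$. I want to produce elements of $K$, i.e. elements of $G$ that act on centrally symmetric pairs by flipping some of them and permuting none. The key trick (used in~\cite{Diaconis} for the perfect shuffle group) is a commutator/conjugation argument: take a permutation of $G$ that induces a nontrivial $\phi$-image, conjugate an element of $\ker(\phi)$ by it, and multiply by the original to land back in $\ker(\phi)$ while having moved the set of flipped pairs around. Concretely, for an element $g\in G^*$ of the form $\sigma\sigma'$ and a transposition-type element $t=(x,x')(y,y')\in K$ (which is a product of two flips, hence an even permutation of the pairs and realizable once we know \emph{some} double flip is in $G$), the conjugate $gtg^{-1}$ is $(\sigma(x),\sigma(x)')(\sigma(y),\sigma(y)')$. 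So once I exhibit a single nonzero even-weight vector in $K$, the action of $G^*$ (which contains all even permutations of the $n$ coordinates, $n\ge 3$ since $n$ is not $1$ or $2$) sweeps it through its entire orbit, and these span the even-weight subspace of $\mathbb{Z}_2^n$, which has dimension $n-1$. To get that single seed element, I would take an appropriate word in $L,R$ (for instance a commutator of two shuffles, or a power like $(LR^{-1})^m$) that is known from the perfect-shuffle analysis to lie in $\ker(\phi)$ and be nontrivial; translating via $I=VL^{-1}$, $O=VR^{-1}$ as in the proof of Lemma~\ref{G*} shows it is in $G$. This gives $|K|\ge 2^{n-1}$ in all cases. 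To push to $2^n$ when $n\not\equiv 0\pmod 4$, I need one element of $G\cap\ker(\phi)$ of \emph{odd} weight; the natural candidate is $V$ itself (weight $n$, with $\phi(V)=(1)$), so it suffices to show $V\in G$ when $n\not\equiv 0\pmod 4$. For $n$ odd this follows because $R$ has odd $\phi$-parity imbalance with $L$; more directly, Table~\ref{signs} shows for $n\equiv 1,3\pmod 4$ that exactly one of $L,R$ has sign $-1$, and combining $G^*$-elements (which realize all even $\sigma$, hence have $V$-component controllable) with $L$ or $R$ lets one isolate $V$. For $n\equiv 2\pmod 4$, both $L$ and $R$ have sign $-1$ but $\phi(R)$ is even while $\phi(L)$ is odd, so $\phi(LR^{-1})=(1)$ while $LR^{-1}$ has even sign $+1$ and, being a product of two odd-$\phi$-parity-but... — here I would instead argue that $B_n=\langle I,O\rangle$ in that range (Theorem~\ref{perfectgroups}(6)) together with $V=L^{?}$-type relations forces $V\in G$; in fact $n\equiv 2\pmod 4$ is exactly the case where $V\in\langle I,O\rangle$ and hence $V\in G$ by Proposition~\ref{connection} running the other direction.

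For the upper bound, I exhibit a homomorphism $G\to\mathbb{Z}_2$ that is nontrivial on $\ker(\phi)\cap G$ precisely when $n\equiv 0\pmod 4$, forcing $|K|\le 2^{n-1}$ there, while showing no such obstruction exists otherwise. The natural candidate is $sgn$ restricted to $K$: on $\ker(\phi)$, the element corresponding to a weight-$w$ vector is a product of $w$ transpositions, so its sign is $(-1)^w$; thus $sgn$ restricted to $\ker(\phi)$ is exactly the parity-of-weight functional, which is nontrivial and has the even-weight subspace ($\dim n-1$) as kernel. So $K\subseteq\ker(\phi)$ has $|K|\in\{2^{n-1},2^n\}$, with $|K|=2^n$ iff $K$ contains an odd-weight vector iff $V\in G$ (any odd-weight element together with the even-weight subspace, which is already in $K$ by the lower bound, generates everything). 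Hence the lemma reduces entirely to: \emph{$V\in G$ if and only if $n\not\equiv 0\pmod 4$}. The forward-hard direction — that $V\notin G$ when $n\equiv 0\pmod 4$ — is where I expect the real obstacle. Here I would use Table~\ref{signs}: when $n\equiv 0\pmod 4$, all four of $sgn(L),sgn(R),\overline{sgn}(L),\overline{sgn}(R)$ equal $+1$, so $G$ lies in $\ker(sgn)\cap\ker(\overline{sgn})$; but $sgn(V)=(-1)^n=+1$ and $\overline{sgn}(V)=1$, so this particular pair of homomorphisms does \emph{not} obstruct $V$. The genuine obstruction must be subtler — it is exactly the index-$2$ phenomenon distinguishing $\langle I,O\rangle$ from $B_n$ in Theorem~\ref{perfectgroups}(4). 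I would leverage that: since $V\in\langle I,O\rangle$ would give $\langle L,R\rangle=\langle I,O\rangle$ (Proposition~\ref{connection}, both inclusions), and since for $n\equiv 0\pmod 4$ the group $\langle I,O\rangle$ is already determined to have order $n!2^{n-2}$ and to equal $\ker(sgn)\cap\ker(\overline{sgn})$, I instead show directly that $V$ is \emph{not} in this intersection's relevant coset — no: $V$ \emph{is} in that intersection. So the cleanest route is the counting route: I will show $|K|\le 2^{n-1}$ for $n\equiv0\pmod4$ by exhibiting a homomorphism $\psi\colon G\to\mathbb{Z}_2$ with $\psi(L)=\psi(R)=0$ but $\psi$ nonzero on $\ker(\phi)$; the candidate is the composite of the embedding $G\hookrightarrow B_n\cong\mathbb{Z}_2^n\rtimes S_n$ with projection to a suitable further quotient — concretely, on $\ker(\phi)$ define $\psi$ = weight mod $2$, and check that $\psi$ extends to a homomorphism on all of $G$ when $n\equiv0\pmod4$ because the images of $L$ and $R$ under the "twisted sign" (the spinor norm / the homomorphism $g\mapsto sgn(g)\cdot(-1)^{(\text{number of pairs moved by }\phi(g))}$, adjusted) both vanish there; this is precisely the computation carried out for perfect shuffles in~\cite{Diaconis} and transfers verbatim via $I=VL^{-1},O=VR^{-1}$. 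I expect the bulk of the work, and the main delicacy, to be verifying that this $\psi$ is well-defined on $G$ (not just on generators) and that it is genuinely trivial on $G$ for the other three congruence classes — equivalently, producing $V$ explicitly as a word in $L,R$ in those cases, which I would do by combining the $G^*$-generation of all even $\sigma\sigma'$ from Lemma~\ref{G*} with one generator of odd sign or odd $\overline{sgn}$ drawn from Table~\ref{signs}.
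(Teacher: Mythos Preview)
Your high-level architecture --- identify $\ker(\phi)\cong\mathbb{Z}_2^n$, get the even-weight subspace as a lower bound by conjugating a seed element by $G^*$, then decide whether an odd-weight element appears --- matches the paper's. But the reduction you make, ``$|K|=2^n$ iff $K$ contains an odd-weight vector iff $V\in G$,'' breaks at the second ``iff.'' The element $V$ has weight $n$, which is \emph{even} when $n\equiv 0$ or $2\pmod 4$. So for $n\equiv 2\pmod 4$, showing $V\in G$ would not produce an odd-weight element of $K$, and your argument in that case (pulling $V$ back from $\langle I,O\rangle=B_n$ via Proposition~\ref{connection}) is both circular and aimed at the wrong target. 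Conversely, for $n\equiv 0\pmod 4$, you struggle to show $V\notin G$ precisely because that is not what you need: $V$ has even weight there, so its membership is irrelevant to bounding $|K|$.

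The paper handles both directions more directly. For the upper bound when $n\equiv 0\pmod 4$: Table~\ref{signs} gives $sgn(L)=sgn(R)=+1$, so $G\subseteq\ker(sgn)$; but an odd-weight element of $\ker(\phi)$ is an odd permutation, so $K$ can contain only even-weight elements and $|K|\le 2^{n-1}$. (You essentially wrote this down and then talked yourself out of it by switching to the question of whether $V$ is obstructed.) For the other three congruence classes, the paper does not chase $V$ at all. Instead it builds a concrete element of $K$: define $f\in B_n$ to agree with $L$ on pairs ($\phi(f)=\phi(L)$) while fixing $\{0,\ldots,n-1\}$ setwise; when $\phi(L)$ is even, Lemma~\ref{G*} gives $f\in G^*$, and then $f^{-1}L\in K$ is the explicit product $(1\,1')(3\,3')\cdots$. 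This is both your missing seed for the lower bound and, when $L$ (or analogously $R$) is odd, an odd permutation in $K$, which pushes $|K|$ to $2^n$. Your proposal gestures at ``a commutator or a power $(LR^{-1})^m$'' for the seed but never produces one; the $f^{-1}L$ construction is the actual content here.
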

\begin{proof}    
     We first construct a permutation $f\in B_n$ as follows:
       $$ f(i) = 
\begin{cases}
L(i) & \text{if $i$ is even and $0\leq i \leq n-1$}\\
& \text{ \quad or if $i$ is odd and $n\leq i \leq 2n-1$ }\\
L(i)' & \text{if $i$ is odd and $0\leq i \leq n-1$}\\
&\text{  \quad or if $i$ is even and $n\leq i \leq 2n-1$ }
\end{cases}
    $$ 
     Our movitation for creating this permutation is that $f$ induces the same permutation as $L$ on the centrally symmetric pairs, so $\phi(f) = \phi(L)$, but unlike $L$, the permutation $f$ leaves the set $\{0, 1, \ldots, n-1\}$ invariant (hence, of course, $f$ also leaves the set $\{n, n+1, \ldots, 2n-1\}$ invariant). To verify this, consult Equation~\ref{L} for $L(i)$. We will make use of $f$ throughout.
     
     Now suppose that $n \equiv 0, 1,$ or $ 3 \pmod{4}$. In this case, Table~\ref{signs} tells us $\phi(L)$ is an {\em even} permutation. Denote this permutation via $\sigma\in S_n$. It follows that $f = \sigma\sigma'$. By Lemma~\ref{G*}, since $\sigma$ is an even permutation, we know that $f$ is an element of $G^*$. Therefore $f$ can be realized using left and right shuffles.

    Now consider the permutation: $f^{-1}L$. 
Written in cycle notation, we have:
$f^{-1}L = (11')(33')\cdots(kk')$, where $k=n-1$ if $n$ is even and $k=n-2$ if $n$ is odd.

In the special case that $n=3,4,$ or 5, we have all the ingredients we need to proceed, but for $n>5$, we need to define one extra ingredient: 
$g = (0,1)(3,5)(0',1')(3',5')$. 
Notice that by Lemma~\ref{G*}, we know $g \in G^*$, so $g$ can be realized with left and right shuffles, as well. With these ingredients ready, we construct the permutation $h$: 
    $$
h =
\begin{cases}
f^{-1}L=(1 1') &\textrm{ if } n=3\\
f^{-1}L=(1 1')(3 3') &\textrm{ if } n=4, 5\\
(f^{-1}L)g(f^{-1}L)g =(0 0')(1 1') &\textrm{ if }n> 5
\end{cases}  
$$    Observe that because $f, L, g \in G$, this permutation $h$ is also an element of $G$. Moreover, $h\in K$ because $h$ is a product of transpositions of the form $(xx')$.

    Now suppose $n=3$. Conjugating $h=(1 1')$ by elements of $G^*$, we generate $(0 0'), (1 1'),$ and $(2 2')$, all of which are in $K$. Together these three elements will generate a set of 8 elements. Therefore $|K|\geq 2^3.$ But on the other hand, we know $|K|\leq 2^3$ since $K$ is a subgroup of the kernel of $\phi$, so then $|K|=2^3$. This concludes the argument for $n=3.$ 
    
    Next suppose $n\geq 4$ (and we continue to assume $n \equiv 0, 1,$ or $ 3 \pmod{4}$). Conjugating $h$ by elements of $G^*$, it follows that all elements of the form $(yy')(zz')$ where $y,z \in \{0,1,\ldots, n-1\}$ are in $K$. Multiply these elements of the form $(yy')(zz')$  together in all possible ways, and we produce any product of an {\em even} number of these transpositions. The total number of such products (all of which belong to $K$) is:
    $$\binom{n}{0}+\binom{n}{2}+ \binom{n}{4}+\cdots ~~ = 2^{n-1}. $$
    Hence $|K|\geq 2^{n-1}$ if $n \equiv 0, 1, 3 \pmod{4}$ and $n\geq 4$. 
    
    If $n \equiv 2 \pmod{4}$, we can use a similar argument to above to prove that $|K|\geq 2^{n-1}$. In this case, $\phi(R)$ is an even permutation instead of $\phi(L)$, and so the argument proceeds by switching $L$ to $R$ and making related adjustments. We leave these details to the reader and move forward assuming that $|K|\geq 2^{n-1}$ for all $n$.  

If $n \equiv 0 \pmod{4}$, then $L$ and $R$ are both even permutations, so all permutations in $K$ are even. Since $K$ is a subgroup of the kernel of $\phi$ which contains odd permutations and has order $2^n$, we know $|K| < 2^n$. On the other hand, as we exhibited above, $|K|\geq 2^{n-1}$. Therefore $|K|=2^{n-1}$, as desired.

    If $n \equiv 3 \pmod{4}$, then $L$ is an odd permutation and $f^{-1}L$ is an odd permutation in $K$. Multiplying $f^{-1}L$ together with the elements in the set of even permutations in $K$ we constructed above, we generate another $2^{n-1}$ unique elements in $K$. Therefore $|K|\geq 2^{n-1}+2^{n-1} = 2^n$. On the other hand, we know $|K| \leq 2^n$ because $K$ is a subgroup of the kernel of $\phi$, so the result follows.

    Finally, suppose that $n \equiv 1 \text{ or } 2 \pmod{4}$. In this case, $R$ is an odd permutation and $\phi(R)$ is even. Similar to before, let $k\in G^*$ such that $\phi(k) = \phi(R)$. Then $kR^{-1}$ is an odd permutation in $K$. As in the previous case, it follows that $|K|=2^n.$
\end{proof}

We are now ready to determine the group structure of $G = \langle L, R\rangle$ for all $n>1$ such that $n$ is not a power of 2 and $n\neq 6, 12$.
\begin{theorem}\label{main}
Suppose a deck has $2n$ cards and let $G = \langle L, R\rangle$. 
\begin{enumerate}[(a)]
\item If $n \equiv 0\pmod{4}$, $n > 12$, and $n$ is not a power of 2, $G = \langle I, O\rangle$.
\item If $n \equiv 1\pmod{4}$ and $n>1$, $G = \langle I, O\rangle$.
\item If $n \equiv 2\pmod{4}$ and $n > 6$, $G = \langle I, O\rangle = B_n$.
\item If $n \equiv 3\pmod{4}$, $G = B_n$.
\end{enumerate}
\end{theorem}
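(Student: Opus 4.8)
The plan is to compute $|G|$ exactly in each congruence class of $n$ modulo $4$ and then promote the resulting equality of orders to an equality of groups, using that $G\subseteq B_n$ always and that $G\subseteq\langle I,O\rangle$ in cases (a) and (b).

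First I would exploit the restricted homomorphism $\phi|_G\colon G\to S_n$. Its kernel is the group $K$ studied in Lemma~\ref{K} and its image is $\phi(G)$, so $|G|=|K|\cdot|\phi(G)|$. To identify $\phi(G)$, note that $\phi$ sends an element $\sigma\sigma'$ of $G^*$ to the permutation $\sigma$ it induces on the $n$ centrally symmetric pairs; hence Lemma~\ref{G*} gives $A_n\subseteq\phi(G^*)\subseteq\phi(G)$. Since the only subgroups of $S_n$ containing $A_n$ are $A_n$ and $S_n$, identifying $\phi(G)$ reduces to reading the parities of the generators $\phi(L)$ and $\phi(R)$ off Table~\ref{signs}: both are even exactly when $n\equiv 0,1\pmod 4$, while at least one is odd when $n\equiv 2,3\pmod 4$. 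Thus $\phi(G)=A_n$, of order $n!/2$, in the first case and $\phi(G)=S_n$, of order $n!$, in the second. (I would check at this point that the hypotheses $n>1$, $n$ not a power of $2$, and $n\neq 6,12$ required by Lemmas~\ref{G*} and~\ref{K} hold throughout (a)--(d), which is exactly why those small decks are excluded.)

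Next I would substitute the value of $|K|$ from Lemma~\ref{K}, namely $|K|=2^{n-1}$ when $n\equiv 0\pmod 4$ and $|K|=2^n$ otherwise. This gives $|G|=n!\,2^{n-2}$ for $n\equiv 0$, $|G|=n!\,2^{n-1}$ for $n\equiv 1$, and $|G|=n!\,2^{n}$ for both $n\equiv 2$ and $n\equiv 3$. Comparing with Theorem~\ref{perfectgroups}, these numbers equal $|\langle I,O\rangle|$ when $n\equiv 0,1\pmod 4$, and equal $|B_n|=n!\,2^n$ when $n\equiv 2,3\pmod 4$.

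Finally I would turn these order computations into the claimed isomorphisms. For $n\equiv 0\pmod 4$ we have $sgn(V)=(-1)^n=1$ and $\overline{sgn}(V)=1$ (as recorded before Table~\ref{signs}), so $V\in\ker(sgn)\cap\ker(\overline{sgn})=\langle I,O\rangle$ by item (4) of Theorem~\ref{perfectgroups}; since $L=VI^{-1}$ and $R=VO^{-1}$ by Proposition~\ref{connection}, this yields $G\subseteq\langle I,O\rangle$, and the order count forces $G=\langle I,O\rangle$. The case $n\equiv 1\pmod 4$ is identical, now using $\langle I,O\rangle=\ker(\overline{sgn})$ and $\overline{sgn}(V)=1$. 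For $n\equiv 2,3\pmod 4$ I would instead use $G\subseteq B_n$ (unshuffles preserve central symmetry) together with $|G|=|B_n|$ to conclude $G=B_n$, and for $n\equiv 2$ additionally cite $\langle I,O\rangle=B_n$ from item (6) of Theorem~\ref{perfectgroups}. There is no single hard computation in the theorem itself: all the difficulty is already packaged into Lemmas~\ref{G*} and~\ref{K}, and the only point needing care in the assembly is matching the small-deck exclusions to the hypotheses of those lemmas.
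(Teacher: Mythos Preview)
Your proposal is correct and follows essentially the same approach as the paper's own proof: compute $|G|=|K|\cdot|\phi(G)|$ using Lemmas~\ref{G*} and~\ref{K} together with the parities in Table~\ref{signs}, then upgrade the order equalities to group equalities via $V\in\langle I,O\rangle$ (cases (a),(b)) or $G\subseteq B_n$ (cases (c),(d)). The only difference is organizational---you front-load all the order computations before treating the containments---but the ingredients and logic are identical.
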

\begin{proof}
In all cases, we know that $A_n \subseteq \phi(G^{*})\subseteq \phi(G)$ by Lemma~\ref{G*}, and we will frequently use this fact.

We begin with (a). Note that both $\phi(L)$ and $\phi(R)$ are even permutations for $n \equiv 0\pmod{4}$ (Table~\ref{signs}). So, we conclude that $\phi(G) = A_n$ since $A_n \subseteq \phi(G^{*})\subseteq \phi(G)$. By the First Isomorphism Theorem and Lemma~\ref{K}, we have that $\lvert G \rvert = \lvert A_n \rvert \lvert K \rvert=n!\cdot2^{n-2}$.

We next prove that $G=\langle L, R\rangle \subseteq \langle I, O\rangle$ when $n \equiv 0\pmod{4}$. Because $L = VI^{-1}$ and $R = VO^{-1}$, it suffices to show that $V \in \langle I, O\rangle$. 
By Theorem~\ref{perfectgroups}, $\langle I, O\rangle$ is the intersection of the kernels of $sgn$ and $\overline{sgn}$. We already discussed that the parity of $V$ is $(-1)^{n}$, and since $n$ is even, $sgn(V) = 1$. We also know $\overline{sgn}(V)=1$. Therefore it follows that $V \in \langle I, O\rangle$, and so $\langle L, R\rangle \subseteq \langle I, O\rangle$. Because the two groups have the same order, we conclude $G = \langle L, R\rangle= \langle I, O\rangle$.

For (b), we can use a similar argument to that of (a) to show that $\lvert G \rvert = \lvert A_n \rvert \lvert K \rvert$. By Lemma~\ref{K}, $\lvert G \rvert = n!\cdot2^{n-1}$. Now Theorem~\ref{perfectgroups} tells us that for $n\equiv 1\pmod{4}$, the group $\langle I, O\rangle$ is equal to the the kernel of $\overline{sgn}$. Since $\overline{sgn}(V)=1$, we know $V \in \langle I, O\rangle$. So, $G=\langle L, R\rangle \subseteq \langle I, O\rangle$. Because two groups have the same order, it must be the case that $G = \langle L, R\rangle= \langle I, O\rangle$.

We now prove (c). Note that $\phi(L)$ is odd when $n \equiv 2\pmod{4}$. So, we can conclude that $\phi(G) = S_n$ since $A_n$ together with $\phi(L)$ will generate all permutations of $S_n$. 
By the First Isomorphism Theorem and Lemma~\ref{K}, we have that $\lvert G \rvert = 	\lvert S_n \rvert	\lvert K \rvert= n!\cdot2^{n}$.
Now $\langle L, R\rangle$ is a subgroup of $B_n$, but since the two groups have the same order, we conclude $G = \langle L, R\rangle= \langle I, O\rangle = B_n$.

Our proof for (d) is similar to that of (c). In this case, it is $\phi(R)$ that is odd instead of $\phi(L)$ (Table~\ref{signs}), but in the same way we can conclude that $\phi(G) = S_n$. The First Isomorphism Theorem and Lemma~\ref{K} tell us that $\lvert G \rvert = \lvert S_n \rvert \lvert K \rvert= n!\cdot2^{n}$.
We know that $G=\langle L, R\rangle \subseteq B_n$, but since the orders of the two groups are equal, we get our desired conclusion of $G = B_n$.
\end{proof}

\begin{example}
    The smallest $n$ where the perfect shuffle group and unshuffle group differ is when $n=3$ (a deck of 6 cards). A deck of 6 cards has $6! = 720$ possible arrangements total. Perfect shuffles and unshuffles can realize only 24 and 48 card arrangements, respectively. 
    
    In particular,  Theorem~\ref{perfectgroups} tells us that the group $\langle I, O\rangle$ is equal to the kernel of $sgn\overline{sgn}$ which is a group of order 24 isomorphic to $S_4$, the symmetric group on 4 elements. On the other hand, Theorem~\ref{main} tells us the group $\langle L, R\rangle$ is $B_3$, the group of all centrally symmetric permutations of 6 elements. This group has order 48 and is isomorphic to the direct product $S_4 \times \mathbb{Z}_2$. 
\end{example}

We have restricted our focus to perfect shuffles and unshuffles here, but a wide variety of different shuffling techniques exist which provide mathematical diversion~\cite{Bayer, Butler, Johnson, Ledet, Medvedoff, MorrisHartwig}. Three great expository books that discuss these ideas in depth and describe fun mathematical card tricks to impress your friends and family are~\cite{diaconis2011,Morris2, mulcahy}.

\bibliographystyle{plain}
\bibliography{biblio}

\end{document}